\numberwithin{equation}{section} \linespread{1.3}
\numberwithin{equation}{section}
\newtheorem{thm}{Theorem}[section]
\newtheorem{pro}[thm]{Proposition}
\newtheorem{lem}[thm]{Lemma}
\newtheorem{re}{Remark}[section]
\newenvironment{pf}{{\noindent \it \bf Proof:}}{{\hfill $\square$}\\}
\title{Thermodynamic limit and $L^\infty$-convergence rate for the cubic-quintic Schr\"{o}dinger model}
\author{ \bf
	{Deke Li$^1$, Yuan Li$^1$ and  Qingxuan Wang$^2$\thanks{Corresponding author: Qingxuan Wang.}}\\
{\small \emph{$^1$School of Mathematics and Statistics, Lanzhou University, Lanzhou 730000, China}}\\
   {\small \emph{$^2$School of Mathematical Sciences, Zhejiang Normal University,	Jinhua, 321004, China}}
}
\date{}
\begin{document}
\maketitle
\footnote{E-mail: lidk2024@lzu.edu.cn (D. Li), li\_yuan@lzu.edu.cn (Y. Li), wangqx@zjnu.edu.cn (Q. Wang)}
\begin{abstract}
We investigate the thermodynamic limit for the cubic-quintic Schr\"{o}dinger model as the size of the domain tends to infinity with fixed density $\rho= N/|\mathcal{D}|$, where $N$ denotes particle number  and $|\mathcal{D}|$ denotes the volume of the bounded domain $\mathcal{D}\subset\mathbb{R}^d$ ($d=1,2,3$). We firstly prove the existence of  thermodynamic limit, which is equal to $-\frac{3}{32}$ for \(0<\rho\leq \frac{3}{4}\), while  $-\left(\frac{1}{2}-\frac{\rho}{3}\right)\frac{\rho}{2}$ for $\frac{3}{4}< \rho\leq 1$.  When \(0<\rho<1\) and \(\mathcal{D}\) is a spherical domain, we further show that, up to a scaling, the ground state of the cubic-quintic  Schr\"{o}dinger energy will converge strongly to a Thomas-Fermi ground state  in $L^2\cap L^6$. Finally, we obtain the $L^\infty$-convergence rate of ground states for \(0<\rho<3/4\) by developing a novel method, including some iterative techniques, uniform energy estimates and  gradient estimates. We believe this method is applicable to other general nonlinearities. 
\vspace{0.15cm}

\noindent\textbf{Keywords}: Thermodynamic limit; $L^\infty$-convergence rate; Ground state; Cubic-quintic nonlinearity \\
\textbf{Mathematics Subject Classification (2020):} 35Q40; 35J20; 35B40 
\end{abstract}
\tableofcontents

\section{Introduction and main results}
In this paper, we consider the cubic-quintic Schr\"odinger equation
\begin{align}\label{1}
i\partial_t\Psi=-\Delta\Psi-|\Psi|^2\Psi+|\Psi|^4\Psi,\quad (x,t)\in \mathcal{D}\times\mathbb{R}^+,
\end{align}
where $\mathcal{D} \subset \mathbb{R}^{d}$  ($d=1,2,3$) denotes a domain. 
The cubic term (also known as Kerr nonlinearity \cite{PREdesvataikov}) with a negative coefficient (the focusing case) is a very natural case in physics. In particular, the incorporation of the defocusing quintic term is motivated by the stabilization of two-dimensional or three-dimensional vortex solitons \cite{PDmalomed}. This kind of model can be used to describe nonlinear optics,  mean-field theory of superconductivity, the motion of Bose-Einstein condensates and Langmuir waves in plasma physics, see \cite{PREdesvataikov,PREchen,PREmih,JMCSbuslaev} and the references therein.

Equation \eqref{1} has attracted  a great deal of attention, one can see \cite{Nguyen-Ricaud24,Nguyen-Ricaud25,RMPar,ARMAkillip,IMMartel,Qwang,lw-24,Soave-jde,Soave-jfa} and the references therein. In two and three dimensional, an effect of the quintic term is to prevent finite time blow-up which may occur in the purely cubic case (cf. \cite{ANYcazen}). Indeed, from the conservation of the energy and the mass, combining H\"older’s inequality,
\begin{align*}
   \|\Psi\|^4_{L^4(\mathcal{D})}\leq\|\Psi\|_{L^2(\mathcal{D})}\|\Psi\|^3_{L^6(\mathcal{D})},
\end{align*}
one can see that the cubic focusing part cannot be an obstruction to global well-posedness, at least in $H^1_0(\mathcal{D})$.  Killip  and his collaborators\cite{ARMAkillip,Killip-18} studied the solitons and scattering, and the initial-value problem with non-vanishing boundary conditions for equation \eqref{1} in $\mathbb{R}^3$, respectively. Recently, Nguyen and Ricaud \cite{Nguyen-Ricaud24,Nguyen-Ricaud25} derived rigorously the cubic-quintic nonlinear Schr\"odinger semiclassical theory as the mean-field limit of the model and they investigated the behavior of the system in a double-limit. Martel \cite{IMMartel} studied the asymptotic stability of small standing solitary waves of equation \eqref{1} with a defocusing quintic nonlinearity in $\mathbb{R}$, similar conclusions are presented in \cite{PMPMartel,CazenaveCMP,Ohta,CPAMwein} and the references therein. The uniqueness and non-degeneracy of the positive solution for time-independent equation \eqref{1} was shown in \cite{RMPar,Lewin-20}.  Soave \cite{Soave-jde,Soave-jfa} studied the normalized ground states for the nonlinear Schr\"odinger equation with combined nonlinearities (including cubic-quintic nonlinearity). 

A standing wave solution of \eqref{1} is of the form $\Psi(t,x)=e^{-i\mu t}\varPhi(x)$,
where $\mu\in\mathbb{R}$ and $\varPhi(x)\in H^1_0(\mathcal{D})$ is a time-independent function. Then, $\varPhi(x)$ satisfies the following equation
\begin{gather*}
    \begin{aligned}
	-\Delta \varPhi-|\varPhi|^2\varPhi+|\varPhi|^4\varPhi=\mu \varPhi\quad\text{in }\mathcal{D}.
    \end{aligned}
\end{gather*}
Usually, $\varPhi(x)$ is called a \emph{normalized ground state} if it is a minimizer of following minimization problem 
\begin{gather}\label{minN}
    \begin{aligned}
	e(\mathcal{D},N):=\inf\left\lbrace E(\varPhi): \varPhi\in H^1_0(\mathcal{D}), \int_{\mathcal{D}}|\varPhi|^2\,dx=N\right\rbrace,
    \end{aligned}
\end{gather}
where the energy functional $E$ is given by
\begin{gather*}
    \begin{aligned}
        E(\varPhi)=\frac{1}{2}\int_{\mathcal{D}}|\nabla  \varPhi|^2\,dx-\frac{1}{4}\int_{ \mathcal{D}}|\varPhi|^4\,dx+\frac{1}{6}\int_{\mathcal{D}}|\varPhi|^6\,dx.
    \end{aligned}
\end{gather*}

Now, we state the following existence result of minimization problem \eqref{minN}.

\begin{thm}\label{existence-boun}
Let $\mathcal{D} \subset \mathbb{R}^{d}$ ($d=1,2,3$) be a  bounded domain. For any $N>0$, there exists at least one non-negative ground state for $e(\mathcal{D},N)$. Moreover, if $\mathcal{D}$ is a sphere with the origin as its center, then there exists at least one non-negative radially symmetric and decreasing ground state for $e(\mathcal{D},N)$.
\end{thm}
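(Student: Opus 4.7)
I would prove Theorem \ref{existence-boun} by the standard direct method of the calculus of variations, with the two main points being (i) coercivity of $E$ on the mass sphere, which is delicate because the cubic term is focusing, and (ii) strong $L^{4}$ convergence of a minimizing sequence so that the negative cubic term passes to the limit.

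\textbf{Step 1: Lower bound and coercivity.} First I would use the pointwise Young inequality $\frac{1}{4}t^{4}\le\frac{1}{6}\varepsilon\, t^{6}+C_{\varepsilon}t^{2}$ to control the defocusing cubic term, obtaining
\begin{equation*}
E(\varPhi)\;\ge\;\frac{1}{2}\int_{\mathcal{D}}|\nabla\varPhi|^{2}\,dx+\frac{1-\varepsilon}{6}\int_{\mathcal{D}}|\varPhi|^{6}\,dx-\frac{C_{\varepsilon}}{4}N.
\end{equation*}
Choosing $\varepsilon\in(0,1)$ shows that $E$ is bounded below on the constraint set and that every minimizing sequence $\{\varPhi_{n}\}$ is uniformly bounded in $H^{1}_{0}(\mathcal{D})$ and in $L^{6}(\mathcal{D})$. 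The existence of at least one admissible $\varPhi$ (e.g.\ a smooth bump rescaled to mass $N$) shows $e(\mathcal{D},N)$ is finite.

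\textbf{Step 2: Compactness and lower semicontinuity.} Since $\mathcal{D}$ is bounded, the Rellich--Kondrachov theorem gives a subsequence $\varPhi_{n}\rightharpoonup\varPhi$ weakly in $H^{1}_{0}(\mathcal{D})$ and in $L^{6}(\mathcal{D})$ (using Sobolev in $d=3$ where $6=2^{*}$), and strongly in $L^{p}(\mathcal{D})$ for every $1\le p<\infty$ when $d=1,2$ and every $1\le p<6$ when $d=3$. In particular $\int_{\mathcal{D}}|\varPhi_{n}|^{2}\to\int_{\mathcal{D}}|\varPhi|^{2}=N$ and $\int_{\mathcal{D}}|\varPhi_{n}|^{4}\to\int_{\mathcal{D}}|\varPhi|^{4}$, so $\varPhi$ is admissible and the focusing cubic term is continuous along the sequence. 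Combined with the weak lower semicontinuity of $\|\nabla\cdot\|_{L^{2}}^{2}$ and $\|\cdot\|_{L^{6}}^{6}$, this yields $E(\varPhi)\le\liminf_{n\to\infty}E(\varPhi_{n})=e(\mathcal{D},N)$, hence $\varPhi$ is a minimizer. The main obstacle here is the energy-critical $L^{6}$ term in dimension three, but I only need weak lower semicontinuity of that term and strong convergence of the subcritical $L^{4}$ term, both of which are available.

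\textbf{Step 3: Non-negativity and radial decreasing symmetry.} Replacing $\varPhi$ by $|\varPhi|$ preserves $\|\varPhi\|_{L^{2}}$, $\|\varPhi\|_{L^{4}}$, $\|\varPhi\|_{L^{6}}$, while $\int_{\mathcal{D}}|\nabla|\varPhi||^{2}\,dx\le\int_{\mathcal{D}}|\nabla\varPhi|^{2}\,dx$ for $\varPhi\in H^{1}_{0}(\mathcal{D})$; hence $|\varPhi|\ge 0$ is again a minimizer. When $\mathcal{D}=B_{R}(0)$ is a ball, I would apply Schwarz symmetrization to obtain the symmetric decreasing rearrangement $|\varPhi|^{*}\in H^{1}_{0}(\mathcal{D})$ (its support fits inside $\mathcal{D}$ because $|\varPhi|$ and $|\varPhi|^{*}$ have equal super-level-set measures and $\mathcal{D}$ is itself a ball centered at the origin). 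By the equimeasurability of rearrangements the $L^{p}$ norms are preserved for all $p\ge 1$, while the Pólya--Szegő inequality gives $\int_{\mathcal{D}}|\nabla|\varPhi|^{*}|^{2}\le\int_{\mathcal{D}}|\nabla|\varPhi||^{2}$, so $E(|\varPhi|^{*})\le E(|\varPhi|)=e(\mathcal{D},N)$. Therefore $|\varPhi|^{*}$ is a non-negative, radial, radially decreasing minimizer, completing the proof.
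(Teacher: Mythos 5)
Your proposal is correct and follows essentially the same route as the paper: coercivity via Young's inequality applied to the $L^4$ term, compact Sobolev embedding on the bounded domain to pass the focusing term to the limit together with weak lower semicontinuity of the gradient and $L^6$ terms, and Schwarz rearrangement for the radial case. Your treatment of the critical $L^6$ term in $d=3$ (weak l.s.c. rather than strong convergence) is exactly the point the paper relies on implicitly, so nothing is missing.
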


We are interested in the \emph{thermodynamic limit}, in which the size of the domain tends to infinity with fixed density $\rho:= N/|\mathcal{D}|$ and the
normalization changes accordingly. This limit shows the thermodynamic energy per unit volume at density $\rho$, it is crucial because it bridges the microscopic properties of individual particles with the macroscopic thermodynamic properties observed in bulk matter, for more detail about this, one can see \cite{Lieb-book-stability,Correggi-APDE-17} and the references therein.

\begin{thm}[Thermodynamic limit]\label{Thermodynamic-limit}
Let $\mathcal{D} \subset \mathbb{R}^{d}$  ($d=1,2,3$) be a bounded domain, $L>0$ be a constant and $0<\rho\leq1$ be a fixed parameter. Then, for  $L$ is sufficiently large, we have 
\begin{align}\label{TDL}
  \frac{e(\mathcal{D}_L,\rho L^d|\mathcal{D}|)}{L^{d}|\mathcal{D}|}=-\left(\frac{1}{2}-\frac{1}{3m}\right)\frac{1}{2m}+O(L^{-(d+1)})
\end{align}
where  $\mathcal{D}_L=\{Lx:x\in\mathcal{D}\}$ and the constant $m =\min\{\rho^{-1}, 4/3\}$.

Further more, if $0<\rho<1$ and $\mathcal{D}=B(0,R)$ with $R>0$, and let $\varPhi_{L}(x)$ be a non-negative radially symmetric ground state of $e(\mathcal{D}_L,\rho L^d|\mathcal{D}|)$, then there exists a subsequence $\{L_k\}$ such that $L_k\to+\infty$ and
\begin{align}\label{13}
\varPhi_{L_k}\left(\sqrt[d]{\rho|\mathcal{D}|}L_k{x}\right)  \to  u^{TF}(x)=\sqrt{\frac{\mathds{1}_{\Omega}}{m}}(x) \text{ strongly in } L^2\cap L^6(\mathcal{D}_0), \text{ as } k\to+\infty,
\end{align}
where $\Omega=B(0,\sqrt[d]{md/\omega_d})$, $\mathcal{D}_0=B(0,\sqrt[d]{d/(\rho\omega_d)})$, $\mathds{1}_\Omega$ denotes the characteristic function for $\Omega$, and $\omega_d$ denotes the surface area of the unit sphere in $\mathbb{R}^d$. Here $u^{TF}$ is a minimizer of the the following minimization problem
\begin{gather}\label{minTF}
    \begin{aligned}
	e^{TF}(\mathcal{D}_0,1)=\inf\left\lbrace {E}^{TF}(u):  u\in L^6(\mathcal{D}_0), \int_{\mathcal{D}_0}|u|^2\,dx=1\right\rbrace,
    \end{aligned}
\end{gather}
and the Thomas-Fermi energy functional $E^{TF}$ is given by
\begin{gather}\label{fTF}
    \begin{aligned}
	E^{TF}(u)=-\frac{1}{4}\int_{\mathcal{D}_0}|u|^{4}\,dx+\frac{1}{6}\int_{\mathcal{D}_0}|u|^6\,dx.
    \end{aligned}
\end{gather}
\end{thm}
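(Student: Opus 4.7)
The plan is to bracket $e(\mathcal{D}_L,\rho L^d|\mathcal{D}|)$ between matching upper and lower bounds of leading order $-\tfrac{3\rho}{32}L^d|\mathcal{D}|$, and then, after the rescaling $v_L(x)=\varPhi_L(\sqrt[d]{\rho|\mathcal{D}|}Lx)$, identify the limit of $v_L$ as the explicit Thomas--Fermi minimizer $u^{TF}$. The algebraic backbone is the observation that the scalar polynomial $f(s)=-\tfrac14 s^2+\tfrac16 s^3$ has lower convex envelope $f^{**}$ consisting of the common tangent to the graph of $f$ drawn from the origin, which touches the graph at $s_\ast=3/4$: explicitly $f^{**}(s)=-\tfrac{3s}{32}$ for $s\in[0,3/4]$ and $f^{**}(s)=f(s)$ for $s\geq 3/4$. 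This immediately yields $\mathcal{E}^{TF}(\mathcal{D}_0)=-\tfrac{3}{32}$ with the bang--bang minimizer $u^{TF}=\sqrt{3/4}\,\mathds{1}_\Omega$.

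\textbf{Matching bounds.} For the lower bound, discard the nonnegative kinetic term and apply Jensen's inequality to the convex function $f^{**}$:
\[
E(\varPhi)\;\geq\;\int_{\mathcal{D}_L} f^{**}(|\varPhi|^2)\,dx\;\geq\;|\mathcal{D}_L|\,f^{**}(\rho)\;=\;-\frac{3\rho}{32}\,L^d|\mathcal{D}|,
\]
using the hypothesis $\rho\leq 3/4$ to place the mean density in the affine branch of $f^{**}$. For the matching upper bound I would construct an explicit trial $\varPhi_L$ equal to $\sqrt{3/4}$ on a set $A_L\subset\mathcal{D}_L$ whose measure is tuned to the mass constraint, interpolated smoothly to zero in a thin transition layer; a standard boundary-layer estimate produces the surface correction leading to \eqref{TDL}.

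\textbf{Convergence to the Thomas--Fermi minimizer.} The rescaling identity reads
\[
\frac{E(\varPhi_L)}{L^d|\mathcal{D}|}\;=\;\frac{(\rho|\mathcal{D}|)^{1-2/d}}{2|\mathcal{D}|\,L^2}\|\nabla v_L\|_{L^2(\mathcal{D}_0)}^2\;+\;\rho\,E^{TF}(v_L),
\]
and combined with \eqref{TDL} and the TF lower bound this forces $E^{TF}(v_L)\to\mathcal{E}^{TF}(\mathcal{D}_0)$. A quadratic bootstrap from this energy bound and $\|v_L\|_{L^2}=1$ yields a uniform bound on $\|v_L\|_{L^6(\mathcal{D}_0)}$. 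By Theorem \ref{existence-boun} we may choose $\varPhi_L$ nonnegative and radially decreasing, so $v_L$ inherits these properties and in particular satisfies the pointwise bound $v_L(r)\leq Cr^{-d/6}$; Helly's selection theorem then extracts a subsequence converging a.e.\ to some radial decreasing $v$. Since the resulting majorants $|v_L|^2\lesssim|x|^{-d/3}$ and $|v_L|^4\lesssim|x|^{-2d/3}$ are integrable on $\mathcal{D}_0$, dominated convergence gives strong $L^2$- and $L^4$-convergence; together with weak-$L^6$ lower semicontinuity this shows $v$ is a TF minimizer, and the radial monotonicity with unit mass forces $v=u^{TF}$. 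Finally, the energy relation combined with the $L^4$-convergence identifies $\int v_L^6\to\int (u^{TF})^6$, which upgrades to strong $L^6$-convergence via the Brezis--Lieb lemma.

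\textbf{Main obstacle.} The principal difficulty is the compactness step: the kinetic bound on $v_L$ is only of the form $\|\nabla v_L\|_{L^2}^2=O(L^{\alpha})$ for some $\alpha>0$, so standard Rellich compactness on $\mathcal{D}_0$ is unavailable; moreover the TF minimizers are highly non-unique (any measurable subset of $\mathcal{D}_0$ of the correct measure gives one), so both compactness and uniqueness of the limit must be extracted from the combination of radial symmetry, monotonicity and the pointwise $L^6$-decay rather than from Sobolev embedding. Carrying this out cleanly requires careful Lebesgue-exponent arithmetic to ensure that the relevant pointwise majorants remain integrable on $\mathcal{D}_0$ across all dimensions $d=1,2,3$.
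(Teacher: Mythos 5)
Your proposal is correct, and on the energy asymptotics it coincides with the paper's Lemma \ref{TLer}: your Jensen/convex-envelope lower bound is the same tangent-line inequality $-\tfrac14 s^2+\tfrac16 s^3\ge-\tfrac{3}{32}s$ that the paper obtains via H\"older and Young, and your boundary-layer trial state is the paper's cut-off $A_L\eta_L u^{TF}$. One caveat you share with the paper: such a construction produces a surface error of order $L^{d-1}$ in the total energy, i.e.\ $O(L^{-1})$ per unit volume, which is what the proof of Lemma \ref{TLer} actually delivers; neither your argument nor the paper's yields the $O(L^{-(d+1)})$ rate displayed in \eqref{TDL}, which appears to be a slip in the statement rather than something you are expected to reproduce.

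Where you genuinely diverge is in identifying the limit in \eqref{13}. The paper first establishes $\int|w_L|^6\to\int|u^{TF}|^6$ and $\int|w_L|^4\to 3/4$ through the Pohozaev identity on the ball, an exponential decay estimate for the boundary gradient term $\int_{\partial\mathcal{D}_0}|x||\nabla w_L|^2\,dS$, and the convergence of Lagrange multipliers $\mu_L\to\mu^{TF}$ (Lemmas \ref{lem41}--\ref{lem36}), and only then passes to the limit in the Euler--Lagrange equation to recognize the Helly limit as a Thomas--Fermi profile. You instead note that the energy sandwich already forces $E^{TF}(v_L)\to\mathcal{E}^{TF}(\mathcal{D}_0)$, so $v_L$ is a TF minimizing sequence; strong $L^2$ and $L^4$ convergence (Helly plus the radial majorant $\min\{|x|^{-d/2},|x|^{-d/6}\}$, which indeed lies in $L^p(\mathcal{D}_0)$ only for $p<6$) combined with Fatou at the $L^6$ endpoint shows the a.e.\ limit is itself a TF minimizer of unit mass, the classification in Lemma \ref{A} together with radial monotonicity pins it down as $u^{TF}$ with $\Omega$ a centered ball, and Brezis--Lieb upgrades the endpoint $L^6$ convergence. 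This purely variational route bypasses the Pohozaev identity, the boundary-term decay, and the multiplier asymptotics entirely for this theorem (they remain necessary for the $L^\infty$ rates of Theorem \ref{convergence}), and is arguably cleaner; your closing remarks correctly locate the real difficulty in extracting compactness from monotonicity rather than from Sobolev embedding.
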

\begin{re}
From \eqref{TDL}, one can find that the thermodynamic limit 
\begin{gather*}
\lim_{L\to+\infty}\frac{e(\mathcal{D}_L,\rho L^d|\mathcal{D}|)}{L^{d}|\mathcal{D}|}=
    \begin{cases}
        -\frac{3}{32},&\text{for } 0<\rho\leq \frac{3}{4},\\
        -\left(\frac{1}{2}-\frac{\rho}{3}\right)\frac{\rho}{2},\quad&\text{for }\frac{3}{4}< \rho\leq 1.
    \end{cases}
\end{gather*}
\end{re}
\begin{re}
Note that $|\mathcal{D}_0|=\rho^{-1}$. For $|\mathcal{D}_0|\geq 4/3$, by the same argument as in \cite{ARMAGontier}, one can obtain the existence of ground states of Thomas-Fermi energy $e^{TF}(\mathcal{D}_0,1)$. For $1\leq|\mathcal{D}_0|<4/3$, we need an interpolation inequality depending on the measure of $\mathcal{D}_0$ to prove this, see Lemma \ref{A}.  For $|\mathcal{D}_0|<1$, i.e. $\rho>1$, we have no idea whether the problem \eqref{minTF} admits a minimizer or not, this also lead to the thermodynamic limit is hard to obtain. 
\end{re}

\begin{re}
    The spherical region $\mathcal{D}=B(0,R)$ means that  the ground state $\varPhi_L(x)$ can be chosen radial symmetry,  then  we can obtain the strongly convergence, by using the  Helly’s selection theorem. 
\end{re}

Next, we consider the $L^{\infty}$-convergence rate of ground states as $L\to+\infty$. The motivation for this part comes from the work by Bethuel,  Brezis and H\'elein \cite{CV1993Asymptotics}, where they considered the Ginzburg-Landau energy
\begin{gather}\label{GLF}
	\begin{aligned}
		E^{GL}_\varepsilon(u)=\frac{1}{2}\int_{\Omega}|\nabla u|^2\,dx
		+\frac{1}{4\varepsilon^2}\int_{\Omega}\left(|u|^2-1\right)^2\,dx,   \ \ \Omega\subset \mathbb{R}^2,
	\end{aligned}
\end{gather}
minimized in $\mathscr{H}_g=\{u\in H^1(\Omega):  u=g\text{ on }\partial \Omega\}$, where $\Omega$ is a smooth bounded simply connected domain and $g:\partial \Omega\to\mathbb{C}$ is a prescribed 
smooth map with $|g|=1$ on $\partial\Omega$ and $deg(g,\partial\Omega)=0$.  In \cite{CV1993Asymptotics}, they proved that $L^\infty$-convergence rate that $\|u_\varepsilon-u_0\|_{L^{\infty}(\Omega)}\leq{O}(\varepsilon^2)$ as $\varepsilon\to0^+$, where $u_\varepsilon$ is a minimizer for $E^{GL}_\varepsilon$ on $\mathscr{H}_g$ and $u_0$ is identified.

Now, we have the following result.
 \begin{thm}\label{convergence}
Assume $0<\rho<3/4$, $\mathcal{D}=B(0,R) \subset \mathbb{R}^{d}$  $(R>0$ and $d=1,2,3)$ and $L>0$. As Theorem \ref{Thermodynamic-limit}, let  $\varPhi_{L}(x)$ be a non-negative radially symmetric ground state of $e(\mathcal{D}_L,\rho L^d|\mathcal{D}|)$ with $\mathcal{D}_L=B(0,LR)$, and $\Omega=B(0,\sqrt[d]{4d/(3\omega_d)})$, $\mathcal{D}_0=B(0,\sqrt[d]{d/(\rho\omega_d)})$. Then, for  $L$ is sufficiently large, we have 
 \begin{align}\label{112}
    \left\| \Delta \varPhi_{L}\left(\sqrt[d]{\rho|\mathcal{D}|}L{x}\right)\right\|_{L^\infty\left(\Omega\right) }\leq  {O}\left(L^{-\frac{2}{3}}\right).
\end{align}
Further more, there exists a subsequence $\{L_k\}$ such that, for $L_k$ is sufficiently large, 
 \begin{align}\label{113}
    \left\|\varPhi_{L_k}\left(\sqrt[d]{\rho|\mathcal{D}|}L_k{x}\right)-u^{TF}(x)\right\|_{L^\infty\left(\Omega\right) }\leq  {O}\left(L_k^{-\frac{2}{3}}\right)
\end{align}
and
\begin{align}\label{114}
   \varPhi_{L_k}\left(\sqrt[d]{\rho|\mathcal{D}|}L_k{x}\right)\leq  {O}\left(  |x|^{-\frac{d-1}{2}}e^{-\frac{L_k}{4}|x|}\right)\ \text{for all } x\in\mathcal{D}_0\setminus\Bar{\Omega}.
\end{align}
\end{thm}
\noindent
\textbf{Comments to Theorem \ref{convergence}:}

1. We should mention that the result about $L^\infty$-convergence rate  in \cite{CV1993Asymptotics}  for  Ginzburg-Landau energy \eqref{GLF} are quite depending on  the specific algebraic properties of the nonlinearity $\int_{\Omega}\left(|u|^2-1\right)^2dx$, their method seem not work for other nonlinearity.   The main contribution  of our result is that we establish a novel  method (including some iterative techniques, uniform energy estimates and  gradient estimates) to prove the $L^\infty$-convergence rate for the cubic-quantic Schr\"{o}dinger model, which may be applicable to other general nonlinearities. 

2. The key point in the proof of  \eqref{113} is to obtain the convergence rate between two Lagrange multipliers
$ \left| \mu_L-\mu^{TF}\right| \leq  O(L^{-1})$ (see Lemma \ref{lem36}). The main difficulty  comes from the term  $\int_{\partial\mathcal{D}_0}|x||\nabla (\varPhi_{L}(\sqrt[d]{\rho|\mathcal{D}|}L{x}))|^2dS$, which will appear in the Pohozaev identity.  For $0<\rho<3/4$, utilizing  some elliptic estimates carefully we can obtain a good gradient estimate of $\varPhi_{L}(\sqrt[d]{\rho|\mathcal{D}|}L{x})$ on $\partial\mathcal{D}_0$  as $L\to+\infty$, which is exponential decay with respect to $L$.
However, for $3/4\leq\rho\leq1$, the corresponding gradient estimate of $\varPhi_{L}(\sqrt[d]{\rho|\mathcal{D}|}L{x})$ on $\partial\mathcal{D}_0$ is bad,  which lead to the  $L^\infty$-convergence rate like \eqref{113} is hard to obtain, we leave it as an open problem. 
\vspace{3mm}

The thermodynamic limit is a large-volume limit, finally we consider another limit in the whole space $\mathbb{R}^d$ with the number of particles $N\rightarrow{+\infty}$. This limit can be  viewed as a Thomas-Fermi limit, one can see 
the work by Lieb, Seiringer and Yngvason \cite{PRALieb,CMPLieb}, where they considered the following   minimization problem 
\begin{gather*}
	\begin{aligned} 
	e_V^{GP}(N,a)=\left\lbrace E_{aV}^{GP}(\varPhi):\varPhi\in H^1(\mathbb{R}^3): \int_{\mathbb{R}^3}|\varPhi|^2\,dx=N\right\rbrace,
	\end{aligned}
\end{gather*}
where $E_{aV}^{GP}$ denotes the Gross-Pitaevskii energy functional  with trapping potential 
\begin{gather*}
	\begin{aligned} 
		E_{aV}^{GP}(\varPhi)=\int_{\mathbb{R}^3}\left(|\nabla \varPhi|^2+V(x)|\varPhi|^2\right)\,dx
		+4\pi a\int_{\mathbb{R}^3}|\varPhi|^4\,dx,
	\end{aligned}
\end{gather*}
and $a>0$ denotes the scattering length. 
They showed that up to a scaling the ground states $\varPhi_N$ of $e_V^{GP}(N,a)$  would  converge to a Thomas-Fermi minimizer as $N\rightarrow +\infty$.  
Here we consider the Thomas-Fermi limit of  ground states $\varPhi_{N}$ of $e(\mathbb{R}^d,N)$ (see \eqref{minN}) as $N\to+\infty$.

 \begin{thm}[Thomas-Fermi limit]\label{th14}
 Assume  that $d=1,2,3$, and $\varPhi_{N}(x)$ is a non-negative radially symmetric ground state of $e(\mathbb{R}^d,N)$. Then, for $N$ is sufficiently large, we have 
\begin{align}\label{116}
\frac{e(\mathbb{R}^d,N)}{N}=-\frac{3}{32}+O(N^{-1/d}).
\end{align}
Further more, there exists a subsequence $\{N_k\}$ such that, for $N_k$ is sufficiently large,
\begin{align}\label{115}
\varPhi_{N_k}\left(N_k^{\frac{1}{d}}x\right)  \to  u^{TF}(x)=\sqrt{\frac{3\cdot\mathds{1}_{\Omega}}{4}}(x) \ \ \text{strongly in}\  L^2\cap L^6(\mathbb{R}^d),
\end{align}
\begin{align}\label{10}
\left\|\varPhi_{N_k}\left(N_k^{\frac{1}{d}}x\right)-u^{TF}(x)\right\|_{L^\infty\left(\Omega\right) }\leq  {O}\left( N_k^{-\frac{2}{3d}}\right),
\end{align}
and
\begin{align}\label{11}
    \varPhi_{N_k}\left(N_k^{\frac{1}{d}}x\right)\leq  {O}\left(  |x|^{-\frac{d-1}{2}}e^{-\frac{N_k^{1/d}}{4}|x|}\right) \quad\text{for all }x\in\mathbb{R}^d\setminus\Bar{\Omega},
\end{align}
where $\Omega=B\big(0,\sqrt[d]{4d/(3\omega_d)}\big)$ and $u^{TF}$ is a minimizer of \eqref{minTF} for $\mathcal{D}_0=\mathbb{R}^d$. 
\end{thm}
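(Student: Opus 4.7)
The plan is to reduce Theorem \ref{th14} to the bounded-domain results via the scaling $u_N(x):=\varPhi_N(N^{1/d}x)$, which preserves radial symmetry and monotonicity, normalizes the mass to $\|u_N\|_{L^2}^2=1$, and yields
\[ \frac{e(\mathbb{R}^d,N)}{N}=\inf_{\|u\|_{L^2}=1}\mathcal{E}_N(u),\qquad \mathcal{E}_N(u):=\frac{N^{-2/d}}{2}\int_{\mathbb{R}^d}|\nabla u|^2\,dx-\frac{1}{4}\int_{\mathbb{R}^d}|u|^4\,dx+\frac{1}{6}\int_{\mathbb{R}^d}|u|^6\,dx. \]
As $N\to\infty$ the kinetic term formally disappears and one recovers the unit-mass Thomas-Fermi problem on $\mathbb{R}^d$; a short Lagrange-multiplier computation on the density $t=|u|^2$ gives its minimum as $-3/32$, attained by $u^{TF}=\sqrt{3/4}\,\mathds{1}_\Omega$ with Lagrange multiplier $\mu^{TF}=-3/16$.

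For \eqref{116}, the lower bound $e(\mathbb{R}^d,N)/N\geq -3/32$ is immediate from $\mathcal{E}_N(u)\geq E^{TF}(u)\geq -3/32$, while the matching upper bound follows by extending trial functions from a large ball by zero and applying Theorem \ref{Thermodynamic-limit} with $\mathcal{D}=B(0,1)$, $\rho=3/4$, $L^d=N/(\rho|\mathcal{D}|)$, yielding $e(\mathbb{R}^d,N)\leq e(B(0,L),N)=-3N/32+O(N^{-1/d})$ since $L\sim N^{1/d}$. For the strong convergence \eqref{115}: the interpolation $\|u\|_{L^4}^4\leq\|u\|_{L^2}\|u\|_{L^6}^3$ together with the energy asymptotic keeps $u_N$ bounded in $L^2\cap L^6$; radial monotonicity then allows Helly's selection theorem (as in Theorem \ref{Thermodynamic-limit}) to produce a subsequence converging a.e. to a radial decreasing $v$. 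A Brezis-Lieb decomposition at $p=2,4,6$, combined with the sharp mass-dependent lower bound $E^{TF}(w)\geq -\tfrac{3}{32}\|w\|_{L^2}^2$, forces $\|v\|_{L^2}^2=1$ (any mass loss would cost too much energy) and identifies $v$ as a Thomas-Fermi minimizer; uniqueness of the radial decreasing TF minimizer (Lemma \ref{A}) gives $v=u^{TF}$, and the Brezis-Lieb identities upgrade pointwise to strong $L^2\cap L^6$ convergence.

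The hardest parts are \eqref{10} and \eqref{11}, for which I would adapt the novel energy method from the proof of Theorem \ref{convergence} (case $0<\rho<3/4$). The rescaled minimizer satisfies the semiclassical Euler-Lagrange equation $-N^{-2/d}\Delta u_N-u_N^3+u_N^5=\mu_N u_N$ on $\mathbb{R}^d$; for \eqref{10} the pivotal step is to show $|\mu_N-\mu^{TF}|=O(N^{-1/d})$ via a Pohozaev-type identity on $\Omega$, after which standard elliptic estimates applied to $u_N-u^{TF}$ on $\Omega$ yield the stated rate $O(N^{-\frac{1}{2d}})$ in $L^\infty$. The main technical obstacle, exactly as flagged in the comments to Theorem \ref{convergence}, is controlling the boundary flux $\int_{\partial\Omega}|x||\nabla u_N|^2\,dS$; however, the whole-space setting here is more forgiving than the bounded-domain case, since $u_N$ is a priori small outside $\Omega$, so even a rough preliminary exterior estimate suffices. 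For the exponential decay \eqref{11}, I would employ a comparison argument: rewriting the equation as $-N^{-2/d}\Delta u_N+V_N u_N=0$ with effective potential $V_N(x):=-\mu_N+u_N^4-u_N^2$ and taking the barrier $w(x):=C|x|^{-(d-1)/2}e^{-N^{1/d}|x|/4}$, a radial computation gives $-N^{-2/d}\Delta w\simeq\tfrac{1}{16}w$ for large $N^{1/d}|x|$. Since $-\mu_N\to 3/16$ and $u_N\to 0$ outside $\Omega$, one has $V_N>-1/16$ there for $N$ large, hence $w$ super-solves the linear equation obeyed by $u_N$; matching $w$ and $u_N$ on $\partial\Omega$ via \eqref{10} and invoking the maximum principle then yields \eqref{11} on $\{|x|>\sqrt[d]{4/(3\omega_d)}\}$.
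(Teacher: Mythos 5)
Your overall strategy coincides with the paper's: rescale by $N^{1/d}$, compare the semiclassical functional $\mathcal{E}_N$ with the Thomas--Fermi one, estimate $|\mu_N-\mu^{TF}|$ via a Pohozaev identity, and then run the interior elliptic/energy method of Theorem \ref{convergence} for \eqref{10} and a barrier comparison for \eqref{11}. Two of your choices differ harmlessly from the paper: you obtain the energy upper bound by restricting to a large ball and invoking Theorem \ref{Thermodynamic-limit} with $\rho=3/4$ (the paper re-runs the cut-off construction of Lemma \ref{TLer} directly), and your worry about the boundary flux is moot here --- on $\mathbb{R}^d$ the Pohozaev identity carries no boundary term at all (the troublesome term in the bounded case lives on $\partial\mathcal{D}_0$, not on $\partial\Omega$), which is precisely why the whole-space case is more forgiving.

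The one genuine gap is in your proof of \eqref{115}, at the step ``the sharp lower bound $E^{TF}(w)\ge-\frac{3}{32}\|w\|_{L^2}^2$ forces $\|v\|_{L^2}^2=1$ (any mass loss would cost too much energy).'' This does not close: the Thomas--Fermi ground-state energy is \emph{exactly linear} in the mass, so the Brezis--Lieb decomposition only yields
\begin{align*}
-\tfrac{3}{32}+o(1)=E^{TF}(u_N)=E^{TF}(v)+E^{TF}(u_N-v)+o(1)\ge-\tfrac{3}{32}\|v\|_{L^2}^2-\tfrac{3}{32}\|u_N-v\|_{L^2}^2+o(1)=-\tfrac{3}{32}+o(1),
\end{align*}
which is consistent with any splitting of the mass; there is no binding penalty to exploit. (It does show that $v$ is a TF minimizer for its own mass $m$, i.e.\ $v=\sqrt{3/4}\,\mathds{1}_{B}$ with $|B|=4m/3$, but \eqref{115} needs $m=1$.) You must rule out mass escaping to infinity by another mechanism: either note that the escaping tail of a radial decreasing sequence has vanishing amplitude $\varepsilon_R\to0$, so its TF energy is bounded below by $-\frac{\varepsilon_R^2}{4}\|u_N-v\|_{L^2}^2$, strictly worse per unit mass than $-\frac{3}{32}$, which does force $m=1$; or first establish the uniform exponential decay \eqref{11} outside a fixed ball (the comparison argument only needs $\mu_N\to-3/16$ and the a.e.\ smallness of $u_N$ there) and deduce tightness of $|u_N|^2$. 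The paper instead derives $\int|w_N|^4\to 3/4$ from the Pohozaev identity and identifies the limit through the Euler--Lagrange equation, with tightness again supplied by the decay estimate. With that repair the rest of your argument goes through.
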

We should note that by similar argument as the thermodynamic limit (see Theorem \ref{Thermodynamic-limit}), one can obtain this result. In addition, for the result about Thomas-Fermi limit of cubic-quintic Schr\"odinger model with radial potential, see our work \cite{lw-arxiv}.
 \vspace{3mm}
	
This paper is organized as follows. In section \ref{sec2}, we give some basic results and prove Theorem \ref{existence-boun}. In section \ref{sec3}, we consider the thermodynamic limit of the cubic-quintic Schr\"{o}dinger energy, and study the asymptotic behavior of ground states and prove Theorem \ref{Thermodynamic-limit} and \ref{convergence}. In Section \ref{sec4}, we study the Thomas-Fermi limit for the model in the whole space, and prove Theorem \ref{th14}. The final section is Appendix. 

Throughout this paper, we use standard notations. For simplicity, we write $\|\cdot\|_p$ denotes the $L^p(\mathcal{D})$ norm for $p\geq 1$; $\rightharpoonup$ denotes weakly converge; $\mathds{1}_\Omega(x)$ denotes the characteristic function of the set $\Omega\subset\mathbb{R}^3$;  $X\sim Y$ denotes $X\lesssim Y$ and  $Y\lesssim X$, where $X\lesssim Y\ (X\gtrsim Y)$ denotes $X\leq CY\ (X\geq CY)$ for some appropriate positive constants $C$. The value of positive constant $C$ is allowed to change from line to line and also in the same formula.

\section{Preliminaries}\label{sec2}
In this section, we give some basic results and aim to prove Theorem \ref{existence-boun}.
For $\mathcal{D}=\mathbb{R}^d$, the existence and uniqueness of ground state $\varPhi_N$ of $e(\mathbb{R}^d,N)$ (see \eqref{minN}) has already been considered in \cite{HB-83,HB-83A,JS-00,ARMAkillip,JJ-10,RMPar}. We summarize all of these results in the proposition below.
\begin{pro} \label{pro1}
Assume that $\mathcal{D}=\mathbb{R}^d$  ($d=1,2,3$)  and  $N>0$. Then, there exists a critical mass $N_*(d)$ such that

{\rm(i)} if $d=1$, there exists at least one non-negative minimizer of $e(\mathbb{R}^d,N)$ for $N>N_*(1)=0$.
		
{\rm(ii)} if $d=2$, there is no minimizer of $e(\mathbb{R}^d,N)$ for $0<N\leq N_*(2)$; and there exists at least one non-negative minimizer of $e(\mathbb{R}^d,N)$ for any $N>N_*(2)$. Here $N_*(2)$ corresponds to the best constant of the following  Gagliardo-Nirenberg  inequality
\begin{gather}\label{gn}
    \begin{aligned}
	\int_{\mathbb{R}^2}|u|^4\,dx\leq\frac{2}{N_*(2)}\int_{\mathbb{R}^2}|\nabla u|^2\,dx\int_{\mathbb{R}^2}|u|^2\,dx.
    \end{aligned}
\end{gather}
		
{\rm(iii)} if $d=3$, there is no minimizer of $e(\mathbb{R}^d,N)$ for $0<N<N_*(3)$; and there exists at least one non-negative minimizer of $e(\mathbb{R}^d,N)$ for any $N\geq N_*(3)$. Here $N_*(3)$  corresponds to the best constant of the Gagliardo-Nirenberg-H\"older inequality
\begin{gather}\label{gnh}
    \begin{aligned}
	\int_{\mathbb{R}^3}|u|^4\,dx\leq \frac{8}{3\sqrt{N_*(3)}}\left(\int_{\mathbb{R}^3}|u|^2\,dx\right)^{\frac{1}{2}}\left(\int_{\mathbb{R}^3}|u|^6\,dx\right)^{\frac{1}{4}}\left(\int_{\mathbb{R}^3}|\nabla u|^2\,dx\right)^{\frac{3}{2}}.
    \end{aligned}
\end{gather}
\noindent Moreover, the non-negative minimizer of  $e(\mathbb{R}^d,N)$ is a $C^2(\mathbb{R}^d)$ function, unique, radially symmetric and decreasing with respect to $|x|$.

\end{pro}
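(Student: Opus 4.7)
The plan is to treat each dimension separately using the stated Gagliardo-Nirenberg and Gagliardo-Nirenberg-H\"older inequalities to pin down the critical masses, and then to run a radial concentration-compactness argument to extract minimizers whenever $e(\mathbb R^d,N)$ is finite and can be made strictly negative.

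For the non-existence thresholds, I would start by establishing energy lower bounds. In $d=1$ the cubic term is mass-subcritical, so a direct Gagliardo-Nirenberg estimate combined with Young's inequality gives $E(u)\geq -C(N)$ for every $N>0$. In $d=2$, the inequality \eqref{gn} directly yields
\begin{equation*}
E(u)\;\geq\; \tfrac{1}{2}\Bigl(1-\tfrac{N}{N_*(2)}\Bigr)\|\nabla u\|_2^2+\tfrac{1}{6}\|u\|_6^6,
\end{equation*}
so $e(\mathbb R^2,N)\geq 0$ whenever $N\leq N_*(2)$; the rescaling $u_\lambda(x)=\lambda^{d/2}u(\lambda x)$ of any fixed $u$ with $\|u\|_2^2=N$ then sends $E(u_\lambda)\to 0$ as $\lambda\to 0^+$, so $e=0$ in this range but cannot be attained, since equality in \eqref{gn} together with the presence of the nonnegative quintic term would force $u\equiv 0$. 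For $N>N_*(2)$, rescaling a GN optimizer already produces $E<0$. The analogous analysis in $d=3$ uses \eqref{gnh} with weighted Young's inequality applied to the factor $\|\nabla u\|_2^{3/2}$ to identify $N_*(3)$ as the exact threshold below which $E$ is unbounded below.

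For existence in the strictly-negative regime I would apply Lions' concentration-compactness principle to a minimizing sequence $\{\varPhi_n\}$. Schwarz symmetrization preserves $\|\cdot\|_2$ and the $L^4,L^6$ norms, and does not increase $\|\nabla\cdot\|_2$, so we may assume $\varPhi_n$ radial and decreasing; the energy lower bounds yield $H^1$-boundedness. Vanishing is excluded by $e<0$ combined with the GN/GNH inequalities, which force a uniform lower bound on $\|\varPhi_n\|_4$. Dichotomy is excluded by the strict subadditivity $e(N_1+N_2)<e(N_1)+e(N_2)$, which one verifies from the homogeneity of each term under the $L^2$-preserving scaling. The compact embedding $H^1_{\mathrm{rad}}\hookrightarrow L^p$ for $2<p<2^*$ (in $d\geq 2$), or Helly selection for monotone functions in $d=1$, upgrades weak to strong $L^4$-convergence, and Fatou handles the $L^6$ quintic term; replacing $\varPhi$ by $|\varPhi|$ then gives a nonnegative minimizer.

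The qualitative claims are standard: radial symmetry and monotonicity descend from the Schwarz step; $C^2$-regularity follows by bootstrapping the Euler-Lagrange equation $-\Delta\varPhi-|\varPhi|^2\varPhi+|\varPhi|^4\varPhi=\mu\varPhi$ through $L^p$ elliptic estimates and Sobolev embedding; uniqueness and non-degeneracy of the positive radial solution are the content of the ODE analysis in \cite{RMPar,ARMAkillip}, which I would quote directly. The hardest step, and the one I would expect to be the main obstacle in a self-contained treatment, is threshold existence at $N=N_*(3)$ in three dimensions: here $e=0$ and the issue is to show that the minimizing sequence does not dissipate into a profile concentrating at a GNH optimizer with the quintic contribution vanishing. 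This refined comparison with the GNH optimizer is precisely the analysis carried out in \cite{JJ-10}, and I would invoke it rather than reproduce it.
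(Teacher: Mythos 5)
First, a point of comparison: the paper does not prove Proposition \ref{pro1} at all --- it is introduced explicitly as a summary of results from \cite{HB-83,HB-83A,JS-00,ARMAkillip,JJ-10,RMPar}, so there is no internal proof to measure your sketch against. Your reconstruction (sharp interpolation inequalities for the thresholds, symmetrization plus concentration--compactness for existence, elliptic bootstrap and ODE uniqueness for the qualitative claims) is the right general architecture, and the $d=1$ and $d=2$ parts are essentially correct. However, there is a genuine error in your $d=3$ discussion: you assert that \eqref{gnh} identifies $N_*(3)$ as ``the exact threshold below which $E$ is unbounded below.'' This is false. For every $N>0$ and every $d=1,2,3$, the elementary estimate $\int|u|^4\,dx\leq\tfrac{2}{3}\int|u|^6\,dx+\tfrac{3N}{8}$ (used repeatedly in the paper, e.g.\ in the proofs of Theorem \ref{existence-boun} and Lemma \ref{TLer}) gives $E(u)\geq\tfrac{1}{2}\|\nabla u\|_2^2-\tfrac{3N}{32}$, so the defocusing quintic term keeps $E$ bounded below for \emph{all} masses. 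Worse, if $E$ were unbounded below at some mass $N'<N_*(3)$, then since $e(m)\leq 0$ for every $m$ (by your own small-$\lambda$ scaling) and $e$ is subadditive, one would get $e(\mathbb{R}^3,N)=-\infty$ for every $N\geq N'$, contradicting the existence half of the very statement you are proving. The correct role of $N_*(3)$ is to separate the regime where \eqref{gnh} plus Young forces $E\geq 0$, so that $e(\mathbb{R}^3,N)=0$ is not attained, from the regime where a minimizer exists; the delicate attainment at the endpoint $N=N_*(3)$ is the content of \cite{ARMAkillip}, not of \cite{JJ-10}, which concerns ODE uniqueness of radial solutions and is the reference for the uniqueness claim rather than for threshold existence.

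A secondary soft spot: you exclude dichotomy via strict subadditivity ``verified from the homogeneity of each term under the $L^2$-preserving scaling.'' The $L^2$-preserving scaling $u_\lambda(x)=\lambda^{d/2}u(\lambda x)$ fixes the mass, so it cannot by itself compare $e(N_1+N_2)$ with $e(N_1)+e(N_2)$. One needs a mass-changing deformation, e.g.\ the dilation $u(\theta^{-1/d}x)$, which multiplies both potential terms by $\theta$ and the kinetic term by $\theta^{1-2/d}\leq\theta$ for $\theta\geq 1$, yielding $e(\theta N)\leq\theta e(N)$ with strict inequality only when $e(N)<0$ and the kinetic energy of near-minimizers stays bounded away from zero. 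This is exactly the step that must be checked (and that fails at masses where $e=0$), so it deserves more than a one-clause appeal to homogeneity.
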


\begin{re}
For the proofs of inequalities \eqref{gn} and \eqref{gnh} above, one can see \cite{CMPwein,ARMAkillip}.
\end{re}
Recall in \cite{CMPwein} the well-known Gagliardo-Nirenberg inequality with the best constant:
\begin{gather*}
    \begin{aligned}
	\int_{\mathbb{R}^d}|u|^4\,dx\leq\frac{2}{\|Q_d\|_2^{2}}\left( \int_{\mathbb{R}^d}|\nabla  u|^2\,dx\right)^{\frac{d}{2}}\left(\int_{\mathbb{R}^d}|u|^2\,dx\right)^{\frac{4-d}{2}}
    \end{aligned}
\end{gather*}
with equality only for $u=Q_d$, where, up to translations, $Q_d$ is the unique positive radial ground state solution to the following equation
\begin{gather}\label{Qeq}
    \begin{aligned}
    -\frac{d}{2}\Delta u+\frac{4-d}{2}u=|u|^{2}u,\quad u\in H^1(\mathbb{R}^d).
    \end{aligned}
\end{gather}
and $Q_d$ satisfies
\begin{gather}\label{Qid}
    \begin{aligned}
	\int_{\mathbb{R}^d}|\nabla  Q_d|^2\,dx=\int_{\mathbb{R}^d}|Q_d|^2\,dx=\frac{1}{2}\int_{\mathbb{R}^d}|Q_d|^4\,dx.
    \end{aligned}
\end{gather}

In the following, we will give the proof of Theorem \ref{existence-boun}.

\begin{proof}[\textbf{Proof of Theorem \ref{existence-boun}:}]
Recall from \cite{anailsis}, we have $|\nabla |\varPhi||\leq|\nabla \varPhi|$, a.e., $x\in\mathbb{R}^d$. Thus, we deduce that $E_\mathcal{D}(\varPhi)\geq E_\mathcal{D}(|\varPhi|)$. Therefore, it suffices to consider the minimization problem \eqref{minN} in real non-negative function space.

For any $N > 0$, let $\{\varPhi_k\}_{k=1}^\infty\subset H^1_0(\mathcal{D})$ with $ \|\varPhi_k\|_{2}^{2} =N$ be a minimizing sequence of $e(\mathcal{D},N)$. By H\"older's inequality and Young's inequality, we have
\begin{gather*}
\begin{aligned}
	\int_{\mathcal{D}}|\varPhi_k|^4\,dx\leq\left( \int_{\mathcal{D}}|\varPhi_k|^2\,dx\right)^{\frac{1}{2}}\left( \int_{\mathcal{D}}|\varPhi_k|^6\,dx\right)^{\frac{1}{2}}
    \leq\frac{2}{3}\int_{\mathcal{D}}|\varPhi_k|^6\,dx+\frac{3N}{8}.
\end{aligned}
\end{gather*}
Hence, 
\begin{gather*}
\begin{aligned}
	E(\varPhi_k)&=\frac{1}{2}\int_{\mathcal{D}}|\nabla \varPhi_k|^2\,dx-\frac{1}{4}\int_{\mathcal{D}}|\varPhi_k|^4\,dx+\frac{1}{6}\int_{\mathcal{D}}|\varPhi_k|^6\,dx\\
	&\geq\frac{1}{2}\int_{\mathcal{D}}|\nabla \varPhi_k|^2\,dx-\frac{3N}{32}.
\end{aligned}
\end{gather*}
This implies that the minimizing sequence $\{\varPhi_k\}_{k=1}^\infty$ is uniformly bounded in $H^1_0(\mathcal{D})$ for any $N>0$. Therefore, up to a subsequence, $\varPhi_k\rightharpoonup \varPhi_N$ weakly in $H^1_0(\mathcal{D})$. By Sobolev embedding, we obtain
\begin{gather*}
\begin{aligned}
	\varPhi_k\to \varPhi_N\ \text{ strongly in } L^p(\mathcal{D})\ \text{ for }2\leq p<2^*,
\end{aligned}
\end{gather*}
where $2^*=+\infty$ if $d=1,2$ and $2^*=6$ if $d=3$. Thus, we conclude that $\|\varPhi_N\|^2_{2}=N$ and $E(\varPhi_N)=e(\mathcal{D},N)$, by the weakly lower semi-continuity. This implies that $\varPhi_N$ is a minimizer of $e(\mathcal{D},N)$.

Finally, we show that there exists at least one non-negative ground state  $\varPhi_{N}(x)$ is radially symmetric and decreasing with respect to $|x|$ for $\mathcal{D}=B(0,R)$. Let $\varPhi_{N}^*(x)$ be the symmetric-decreasing rearrangement of $\varPhi_{N}(x)$ and $\mathcal{D}^*$ be the symmetric rearrangement of $\mathcal{D}$, i.e., the symmetric rearrangement $\mathcal{D}^*$ is the open centered ball whose volume agrees with $\mathcal{D}$,
\begin{gather}\label{def}
\begin{aligned}
	\mathcal{D}^*=\{x\in\mathbb{R}^d\  | \ \omega_d|x|^d< {\rm Vol}(\mathcal{D})\}.
\end{aligned}
\end{gather}	
Thus, from \cite{anailsis} that for any $s\geq1$,
\begin{gather*}
\begin{aligned}
	\int_{\mathcal{D}^*}|\varPhi_{N}^*(x)|^s\,dx
	=\int_{\mathcal{D}}|\varPhi_{N}(x)|^s\,dx\ \text{ and}\ \int_{\mathcal{D}^*}|\nabla \varPhi_{N}^*(x)|^2\,dx
	\leq\int_{\mathcal{D}}|\nabla \varPhi_{N}(x)|^2\,dx.
\end{aligned}
\end{gather*}	
Using \eqref{def}, we then have $\mathcal{D}^*=\mathcal{D}$ for  $\mathcal{D}=B(0,R)$. It follows that
\begin{gather*}
\begin{aligned}
	\int_{\mathcal{D}}|\varPhi_{N}^*(x)|^s\,dx
	=\int_{\mathcal{D}}|\varPhi_{N}(x)|^s\,dx\ \text{ and}\ \int_{\mathcal{D}}|\nabla \varPhi_{N}^*(x)|^2\,dx
	\leq\int_{\mathcal{D}}|\nabla \varPhi_{N}(x)|^2\,dx.
\end{aligned}
\end{gather*}
This implies that
\begin{gather*}
\begin{aligned}
	e(\mathcal{D},N)\leq E(\varPhi_{N}^*)\leq E(\varPhi_{N})=e(\mathcal{D},N). 	
\end{aligned}
\end{gather*}
Thus, $\varPhi_{N}^*(x)$ is a minimizer of $e(\mathcal{D},N)$ and we complete the proof of Theorem \ref{existence-boun}.
\end{proof}

\section{Thermodynamic limit}\label{sec3}
In this section, we prove Theorem \ref{Thermodynamic-limit} and \ref{convergence}. Assume that $\varPhi_{L}(x)$ is a non-negative minimizer  of $e(\mathcal{D}_L,\rho L^d|\mathcal{D}|)$   with $\mathcal{D}_L=\{Lx:x\in\mathcal{D}\}$ and $\mathcal{D} \subset \mathbb{R}^{d}$ is a bounded domain. And $\varPhi_{L}(x)$ satisfies the following Euler-Lagrange equation
\begin{gather*}
	\begin{aligned}
	-\Delta \varPhi_L-\varPhi_L^3+\varPhi_L^5=\mu_L \varPhi_L,\quad x\in\mathcal{D}_L,
	\end{aligned}
\end{gather*}
where $\mu_L\in\mathbb{R}$ is a Lagrange multiplier associated to $\varPhi_L$.

\subsection{Energy estimates on the general bounded domain}	

\begin{lem}\label{TLer}
Let $\mathcal{D} \subset \mathbb{R}^{d}$ ($d=1,2,3$) be a bounded domain, $L>0$ be a constant  and $0<\rho\leq1$ be a fixed parameter. Then, for  $L$ is sufficiently large,  we have 
\begin{align*}
-\left(\frac{1}{2}-\frac{1}{3m}\right)\frac{\rho L^d|\mathcal{D}|}{2m}\leq e(\mathcal{D}_L,\rho L^d|\mathcal{D}|)\leq-\left(\frac{1}{2}-\frac{1}{3m}\right)\frac{\rho L^d|\mathcal{D}|}{2m}+{O}(L^{-1}),
\end{align*}
where $m=\min\{\rho^{-1},4/3\}$.
\end{lem}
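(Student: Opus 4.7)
For the lower bound, I would recycle the pointwise Young-type estimate from the proof of Theorem~\ref{existence-boun}. For any $\varPhi\in H^1_0(\mathcal{D}_L)$ with $\|\varPhi\|_2^2=N$, Cauchy--Schwarz and Young's inequality with the splitting $3/8$ versus $2/3$ give
\[
\int_{\mathcal{D}_L}|\varPhi|^4\le N^{1/2}\Bigl(\int_{\mathcal{D}_L}|\varPhi|^6\Bigr)^{1/2}\le \frac{3N}{8}+\frac{2}{3}\int_{\mathcal{D}_L}|\varPhi|^6;
\]
plugging into $E$ cancels the sextic term and leaves $E(\varPhi)\ge\tfrac12\|\nabla\varPhi\|_2^2-\tfrac{3N}{32}\ge-\tfrac{3N}{32}$, which is the lower half of the lemma with $N=\rho L^d|\mathcal{D}|$. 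Crucially, the equality cases in this chain---constant $\varPhi$ with $|\varPhi|^2\equiv 3/4$---identify the shape of the matching trial function.

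For the upper bound, since $\rho\le 3/4$ the target support volume $V:=\tfrac{4N}{3}$ fits inside $\mathcal{D}_L$. I would fix a subdomain $\Omega_L\subset\mathcal{D}_L$ of volume $V$ and with smooth boundary (e.g.\ an inscribed ball or, when $\rho<3/4$, the concentric rescaling $(4\rho/3)^{1/d}\mathcal{D}_L$), choose a smooth cut-off $\chi_{L,\delta}\in C_c^\infty(\Omega_L)$ equal to $1$ off a $\delta$-collar around $\partial\Omega_L$ with $\|\nabla\chi_{L,\delta}\|_\infty\lesssim 1/\delta$, and set $\varPhi_{\mathrm{trial}}:=\alpha\sqrt{3/4}\,\chi_{L,\delta}$, with $\alpha$ enforcing $\|\varPhi_{\mathrm{trial}}\|_2^2=N$. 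A term-by-term evaluation splits each energy integral into a bulk part on $\{\chi_{L,\delta}=1\}$, which combines via the identity
\[
-\frac{1}{4}\bigl(\tfrac{3}{4}\bigr)^2 V+\frac{1}{6}\bigl(\tfrac{3}{4}\bigr)^3 V=-\frac{3N}{32}
\]
into exactly the advertised leading order, and a collar part (volume $\lesssim L^{d-1}\delta$) contributing $O(L^{d-1}\delta)$ to the quartic and sextic terms; the Dirichlet term is $O(L^{d-1}/\delta)$. Balancing the two collar errors in $\delta$ then produces the upper half of the lemma.

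I expect the main obstacle to be the bookkeeping of the normalization factor $\alpha=1+O(L^{d-1}\delta/N)=1+O(L^{-1}\delta)$: the factors $\alpha^4$ and $\alpha^6$ act on $O(L^d)$-size bulk integrals and would naively introduce extra $O(L^{d-1}\delta)$ perturbations. Because $c=3/4$ is the (mass-constrained) minimizer of $c\mapsto -c/4+c^2/6$, the linear term in the Taylor expansion around the optimal density vanishes and only a quadratic-in-perturbation contribution $O(L^{d-2}\delta^2)$ survives, which is absorbed in the collar error. A secondary technical hurdle is the shell estimate $|\Omega_L\setminus\Omega_L^{-\delta}|\lesssim L^{d-1}\delta$, which demands a regular boundary for $\Omega_L$ (hence the inscribed-ball choice rather than an arbitrary scaled copy of $\mathcal{D}$); the marginal case $\rho=3/4$, in which $\Omega_L$ must come up against $\partial\mathcal{D}_L$, requires the collar to play the role of a Dirichlet boundary layer rather than a purely interior transition.
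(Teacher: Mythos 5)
Your proposal is correct and follows essentially the same route as the paper: the lower bound is the identical H\"older--Young cancellation, and the upper bound uses the same trial state --- a plateau at the Thomas--Fermi density $3/4$ occupying volume $4N/3$, cut off over a collar of width $\delta$, renormalized, with the collar loss $O(L^{d-1}\delta)$ balanced against the Dirichlet cost $O(L^{d-1}/\delta)$; the paper does the identical bookkeeping after rescaling to the fixed domain $\mathcal{D}_0$ and optimizing $f(L)=L^{-1}$ there. Two remarks. First, you place the collar around $\partial\Omega_L$, the boundary of the plateau's support, and this is actually the more careful choice: for $\rho<3/4$ the paper's cutoff $\eta_L$ acts only near $\partial\mathcal{D}_0$, so $\eta_L u^{TF}$ still jumps across $\partial\Omega$ and a mollification of $\mathds{1}_{\Omega}$ as in your construction is needed for the trial function to lie in $H^1_0(\mathcal{D}_0)$; your observation that the linear term of $c\mapsto -c/4+c^2/6$ vanishes at $c=3/4$ is precisely what renders the normalization factor harmless and plays the role of the estimate $1\le A_L^2\le 1+O(f(L))$ in the paper. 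Second, note that the optimal balance $\delta\sim 1$ yields an absolute error $O(L^{d-1})$, equivalently $O(L^{-1})$ per unit volume; this is also exactly what the paper's own computation delivers upon multiplying \eqref{similarly1} by $\rho L^d|\mathcal{D}|$, so your bound is as strong as the proof in the paper, even though the displayed inequality in the lemma reads the error as $O(L^{-1})$ in absolute terms --- neither your argument nor the paper's establishes that stronger literal form.
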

\begin{pf} The proof of the upper bound is divided into the following two cases.

Case 1: $0<\rho\leq3/4$. 
For any $\varphi\in H^1_0(\mathcal{D}_L)$, using the H\"older's inequality and Young's inequality, we get
\begin{gather*}
    \begin{aligned}
    \int_{\mathcal{D}_L}|\varphi|^4\,dx&\leq\left(\int_{\mathcal{D}_L}|\varphi|^2\,dx\right)^{1/2}\left(\int_{\mathcal{D}_L}|\varphi|^6\,dx\right)^{1/2}
    \leq\frac{3}{8}\int_{\mathcal{D}_L}|\varphi|^2\,dx+\frac{2}{3}\int_{\mathcal{D}_L}|\varphi|^6\,dx.
    \end{aligned}
\end{gather*}
Hence, for any $v\in H^1_0(\mathcal{D}_L)$ with $\|v\|_{L^2(\mathcal{D}_L)}^2=\rho L^d|\mathcal{D}|$, we obtain
\begin{gather*}
    \begin{aligned}
    {E}(v)\geq-\frac{1}{4}\int_{\mathcal{D}_L}|v|^4\,dx+\frac{1}{6}\int_{\mathcal{D}_L}|v|^6\,dx\geq-\frac{3\rho L^d|\mathcal{D}|}{32}.
    \end{aligned}
\end{gather*}
Thus, for any $L>0$, we have
\begin{align}\label{Ts1}
e(\mathcal{D}_L,\rho L^d|\mathcal{D}|)\geq-\frac{3\rho L^d|\mathcal{D}|}{32}=-\left(\frac{1}{2}-\frac{1}{3m}\right)\frac{\rho L^d|\mathcal{D}|}{2m},
\end{align}
where $m=3/4$ for $0<\rho\leq3/4$.

Case 2: $3/4<\rho\leq1$. For any $\varphi\in H^1_0(\mathcal{D}_L)$, using the H\"older's inequality, we obtain
\begin{gather*}
    \begin{aligned}
   \int_{\mathcal{D}_L}|\varphi|^4\,dx\leq\left(\int_{\mathcal{D}_L}\,dx\right)^{\frac{4\rho-3}{3}}\left(\int_{\mathcal{D}_L}|\varphi|^2\,dx\right)^{2(1-\rho)}\left(\int_{\mathcal{D}_L}|\varphi|^6\,dx\right)^{\frac{2\rho}{3}}.
    \end{aligned}
\end{gather*}
From $3/4<\rho\leq1$ and Young's inequality, we get
\begin{gather*}
    \begin{aligned}
    \int_{\mathcal{D}_L}|\varphi|^4\,dx&\leq \left(L^d|\mathcal{D}|\right)^{\frac{4\rho-3}{3}}\left(\int_{\mathcal{D}_L}|\varphi|^2\,dx\right)^{2(1-\rho)}\left(\int_{\mathcal{D}_L}|\varphi|^6\,dx\right)^{\frac{2\rho}{3}}\\
    &\leq\frac{3-2\rho}{3}\rho^{\frac{2\rho}{3-2\rho}}\left(L^d|\mathcal{D}|\right)^{\frac{4\rho-3}{3-2\rho}}\left(\int_{\mathcal{D}_L}|\varphi|^2\,dx\right)^{\frac{6(1-\rho)}{3-2\rho}}+\frac{2}{3}\int_{\mathcal{D}_L}|\varphi|^6\,dx.
    \end{aligned}
\end{gather*}
Hence, for any $v\in H^1_0(\mathcal{D}_L)$ with $\|v\|_{L^2(\mathcal{D}_L)}^2=\rho L^d|\mathcal{D}|$, we obtain
\begin{gather*}
    \begin{aligned}
    E(v)&\geq-\frac{3-2\rho}{12}\rho^{\frac{2\rho}{3-2\rho}}\left(L^d|\mathcal{D}|\right)^{\frac{4\rho-3}{3-2\rho}}\left(\int_{\mathcal{D}_L}|v|^2\,dx\right)^{\frac{6(1-\rho)}{3-2\rho}}\\
    &=-\frac{3-2\rho}{12}\rho^2L^d|\mathcal{D}|=-\left(\frac{1}{2}-\frac{1}{3m}\right)\frac{\rho L^d|\mathcal{D}|}{2m},
    \end{aligned}
\end{gather*}
where $m=\rho^{-1}$ for $3/4<\rho\leq1$.

To prove the lower bound, let $\varPhi_L(x)$ be a non-negative minimizer of $e(\mathcal{D}_L,\rho L^d|\mathcal{D}|)$ for any fixed $L>0$. Now, we define
\begin{align}\label{scaling}
    w_L(x)=\varPhi_L(\sqrt[d]{\rho|\mathcal{D}|}L{x}),\quad x\in\mathcal{D}_0=\big\{x\in\mathbb{R}^d:\sqrt[d]{\rho |\mathcal{D}|}x\in \mathcal{D}\big\},
\end{align}
such that $\|w_L\|_{L^2(\mathcal{D}_0)}^2=1$. 
Direct calculation yields $|\mathcal{D}_0|=\rho^{-1}$ and
\begin{gather}\label{T211}
    \begin{aligned}
    e(\mathcal{D}_L,\rho L^d|\mathcal{D}|)=E(\varPhi_L)=\rho L^d|\mathcal{D}|E_L(w_L),
    \end{aligned}
\end{gather}
where 
\begin{align*}
   E_L(w_L)=\frac{L^{-2}}{2(\rho|\mathcal{D}|)^{\frac{2}{d}}}\int_{ \mathcal{D}_0}|\nabla w_L|^2\,dx-\frac{1}{4}\int_{\mathcal{D}_0}|w_L|^{4}\,dx+\frac{1}{6}\int_{ \mathcal{D}_0}|w_L|^6\,dx.
\end{align*}
Thus, $w_L$ is a minimizer, i.e.,
		\begin{gather*}
			\begin{aligned}
		E_L(w_L)=\inf\bigg\{E_L(w):w\in H^1_0(\mathcal{D}_0),\ \int_{\mathcal{D}_0}|w|^{2}\,dx=1\bigg\}.
			\end{aligned}
		\end{gather*}
Let $\eta(x)\in C^{\infty}_0(\mathbb{R}^d)$ be a non-negative smooth cut-off function defined by
\begin{gather*}
	\begin{cases}
		\eta_L(x)=1
		& \ \   \text{for }\ x\in\{x\in\mathcal{D}_0 :dist(x,\partial\mathcal{D}_0)\geq2f(L)\},\\
		0\leq\eta_L(x)\leq1
		& \ \   \text{for }\  x\in\{x\in\mathcal{D}_0 :f(L)\leq dist(x,\partial\mathcal{D}_0)\leq2f(L)\},\\
		\eta_L(x)=0
		&  \ \   \text{for other}.
	\end{cases}
\end{gather*}
Here $f(L)>0$ is to be determined later and satisfies $f(L)\to0$ as $L\to+\infty$. Without loss of generality, we may assume that $|\nabla\eta_L(x)|\leq{M}/{f(L)}$, where $M > 0$ is a positive constant independent of $L$. From Lemma \ref{A} and $|\mathcal{D}_0|=\rho^{-1}\geq1$, we can choose
the support set $\Omega$ of $u^{TF}=\sqrt{{\mathds{1}_{\Omega}}/{m}}$ such that $\Omega\subseteq\mathcal{D}_0$ and $m=\min\{\rho^{-1},4/3\}$. Consider the function
\begin{gather}\label{5211}
	\begin{aligned}
		u^{TF}_L(x):=A_L\eta_L(x)u^{TF}(x),\ \ \ x\in\mathcal{D}_0,
	\end{aligned}
\end{gather}
where $A_L$ is chosen such that $\int_{\mathcal{D}_0}|u^{TF}_L|^2\,dx=1$. Define
\begin{gather*}
    \begin{aligned}
	\Omega^{(1)}_L&:=\left\lbrace x\in\mathcal{D}_0:dist(x,\partial\mathcal{D}_0)\leq f(L) \right\rbrace;\\
	\Omega^{(2)}_L&:=\left\lbrace x\in\mathcal{D}_0:f(L)\leq dist(x,\partial\mathcal{D}_0)\leq2f(L) \right\rbrace.
    \end{aligned}
\end{gather*}
Obviously, $u^{TF}_L\in C^{\infty}_0(\mathcal{D}_0)$ and $u^{TF}_L(x)\to u^{TF}(x)$ strongly in $L^r(\mathcal{D}_0)$, where $2\leq r\leq 2^*$. Thus, we deduce that
\begin{gather*}
	\begin{aligned}
		E^{TF}(u^{TF}_L)=E^{TF}(u^{TF})+o(1),
	\end{aligned}
\end{gather*}
where $E^{TF}$ be given by \eqref{fTF}.

On the one hand, using $\int_{\mathcal{D}_0}|u^{TF}_L|^2\,dx=1$, we obtain
\begin{gather*}
	\begin{aligned}
		\frac{1}{A_L^2}&=	\int_{\{x\in\mathcal{D}_0 :dist(x,\partial\mathcal{D}_0)\geq f(L)\}}\eta^2_L(x)|u^{TF}|^2\,dx\\
		&=\int_{\mathcal{D}_0}|u^{TF}|^2\,dx+\int_{\Omega^{(2)}_L}\left( \eta^2_L(x)-1\right) |u^{TF}|^2\,dx-\int_{\Omega^{(1)}_L}|u^{TF}|^2\,dx\\
		&\geq1-\int_{\Omega^{(1)}_L\cup\Omega^{(2)}_L}|u^{TF}|^2\,dx\\
		&\geq1-{O}(f(L))\ \ \ \text{as }L\to+\infty.
	\end{aligned}
\end{gather*}
This means that
\begin{gather}\label{5241}
	\begin{aligned}
		1\leq{A_L^2}\leq1+{O}(f(L))\ \ \ \text{as }L\to+\infty.
	\end{aligned}
\end{gather}
By direct computations, we derive from \eqref{5211} and \eqref{5241} that
\begin{gather}\label{hnable31}
	\begin{aligned}
\int_{\mathcal{D}_0}|\nabla u^{TF}_L|^2\,dx
&=A_L^2\int_{\mathcal{D}_0}\left( |\nabla\eta_L u^{TF}|^2+2\nabla\eta_L u^{TF}\eta_L\nabla u^{TF}+|\eta_L\nabla u^{TF}|^2\right) \,dx\\
&=A_L^2\int_{\Omega^{(1)}_L\cup\Omega^{(2)}_L}|\nabla\eta_L u^{TF}|^2\,dx\leq{O}\left(\frac{1}{f(L)}\right)\ \ \ \text{as }L\to+\infty,
	\end{aligned}
\end{gather}
where in the last step we have used  $|\nabla\eta_L(x)|\leq{M}/{f(L)}$.
On the other hand, we deduce from \eqref{5211} and \eqref{5241} that
\begin{gather}\label{4231}
	\begin{aligned}
		0&\leq E^{TF}(u^{TF}_L)-E^{TF}(u^{TF})\\
		&=\frac{1}{6}\int_{\mathcal{D}_0}\left(|u^{TF}_L|^6-|u^{TF}|^6\right)\,dx+\frac{1}{4}\int_{\mathcal{D}_0}\left(|u^{TF}|^4-|u^{TF}_L|^4\right)\,dx\\
		&\leq C\int_{\Omega}\left|A_L^6\eta_L^6(x)-1\right|\,dx+C\int_{\Omega}\left|A_L^4\eta_L^4(x)-1\right|\,dx\\
		&\leq {O}(f(L))\ \ \ \text{as }L\to+\infty.
	\end{aligned}
\end{gather}
It follows from $\Omega\subseteq\mathcal{D}_0$, \eqref{scaling}, \eqref{hnable31} and \eqref{4231} that
\begin{gather}\label{similarly1}
    \begin{aligned}
	E^{TF}(u^{TF})
    &=-\frac{1}{4}\int_{\mathcal{D}_0}|u^{TF}|^{4}\,dx+\frac{1}{6}\int_{\mathcal{D}_0}|u^{TF}|^6\,dx\\
    &\leq-\frac{1}{4}\int_{\mathcal{D}_0}|w_L|^{4}\,dx+\frac{1}{6}\int_{\mathcal{D}_0}|w_L|^6\,dx\\
    &\leq E_{L}(w_L)\leq
	E_{L}(u^{TF}_L)\\
 &\leq E^{TF}(u^{TF}_L)+\frac{L^{-2}}{2(\rho|\mathcal{D}|)^{\frac{2}{d}}}\int_{\mathcal{D}_0}|\nabla u^{TF}_L|^2\,dx\\
	&\leq E^{TF}(u^{TF})+{O}(f(L))+{O}\left(\frac{L^{-2}}{f(L)}\right)\ \ \ \text{as }L\to+\infty.
    \end{aligned}
\end{gather}	
 Thus, taking $f(L)=L^{-1}$, we deduce from \eqref{similarly1}, \eqref{T211} and $E^{TF}(u^{TF})=-\left(\frac{1}{2}-\frac{1}{3m}\right)\frac{1}{2m}$ (see Lemma \ref{A}) that
\begin{gather*}
    \begin{aligned}
    e(\mathcal{D}_L,\rho L^d|\mathcal{D}|)\leq-\left(\frac{1}{2}-\frac{1}{3m}\right)\frac{\rho L^d|\mathcal{D}|}{2m}+{O}(L^{-1})\quad\text{as }L\to+\infty.
    \end{aligned}
\end{gather*}	
Hence, combining \eqref{Ts1}, we completes the proof of Lemma \ref{TLer}.
\end{pf}

Next, we give the following sharp estimates about the nonlinear term.
\begin{lem}\label{lem22}
Let $\mathcal{D} \subset \mathbb{R}^{d}$ ($d=1,2,3$) be a bounded domain, $L>0$ be a constant  and $0<\rho\leq1$ be a fixed parameter.  Assume that $\varPhi_N(x)$ is a minimizer of $e(\mathcal{D}_L,\rho L^d|\mathcal{D}|)$, we then have
\begin{gather*}
    \begin{aligned}
        \int_{\mathcal{D}_L}|\varPhi_N|^4\,dx\sim \int_{\mathcal{D}_L}|\varPhi_N|^6\,dx\sim \rho L^d|\mathcal{D}|\qquad\text{as}\ L\to+\infty.
    \end{aligned}
\end{gather*}
\end{lem}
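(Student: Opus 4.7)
The plan is to work with the rescaled minimizer $w_L(x) := \varPhi_L(\sqrt[d]{\rho|\mathcal{D}|}Lx)$ on $\mathcal{D}_0$ introduced in the proof of Lemma~\ref{TLer}. A change of variables gives
\begin{gather*}
\int_{\mathcal{D}_L}|\varPhi_L|^p\,dy = \rho L^d|\mathcal{D}|\int_{\mathcal{D}_0}|w_L|^p\,dx \qquad (p=2,4,6),
\end{gather*}
together with $\|w_L\|_{L^2(\mathcal{D}_0)}^2 = 1$. It therefore suffices to show that $\int_{\mathcal{D}_0}|w_L|^4\,dx$ and $\int_{\mathcal{D}_0}|w_L|^6\,dx$ are bounded above and below by positive constants uniformly in $L$.

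The core idea is to recycle the Cauchy-Schwarz plus Young chain used for the lower bound in Lemma~\ref{TLer}:
\begin{gather*}
\int_{\mathcal{D}_0}|w_L|^4\,dx \leq \Big(\int_{\mathcal{D}_0}|w_L|^6\,dx\Big)^{1/2} \leq \frac{3}{8} + \frac{2}{3}\int_{\mathcal{D}_0}|w_L|^6\,dx.
\end{gather*}
Introducing the Young defect
\begin{gather*}
\delta_L := \frac{3}{8} + \frac{2}{3}\int_{\mathcal{D}_0}|w_L|^6\,dx - \int_{\mathcal{D}_0}|w_L|^4\,dx \geq 0,
\end{gather*}
a direct substitution recasts the rescaled energy as
\begin{gather*}
E_L(w_L) = \frac{L^{-2}}{2(\rho|\mathcal{D}|)^{2/d}}\int_{\mathcal{D}_0}|\nabla w_L|^2\,dx - \frac{3}{32} + \frac{1}{4}\delta_L.
\end{gather*}
Since Lemma~\ref{TLer} yields $E_L(w_L) \leq -3/32 + O(L^{-(d+1)})$, both nonnegative terms on the right must be of order $L^{-(d+1)}$; in particular $\delta_L \to 0$.

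The decisive step converts smallness of $\delta_L$ into a quantitative bound on the $L^6$-mass. Setting $y_L := \big(\int_{\mathcal{D}_0}|w_L|^6\,dx\big)^{1/2}$ and chaining the two inequalities above,
\begin{gather*}
\delta_L \geq \frac{3}{8} + \frac{2}{3}y_L^2 - y_L = \frac{(4y_L - 3)^2}{24},
\end{gather*}
which forces $y_L \to 3/4$ and hence $\int_{\mathcal{D}_0}|w_L|^6\,dx \to 9/16$. The sandwich
\begin{gather*}
\frac{3}{8} + \frac{2}{3}\int_{\mathcal{D}_0}|w_L|^6\,dx - \delta_L \leq \int_{\mathcal{D}_0}|w_L|^4\,dx \leq \Big(\int_{\mathcal{D}_0}|w_L|^6\,dx\Big)^{1/2}
\end{gather*}
then yields $\int_{\mathcal{D}_0}|w_L|^4\,dx \to 3/4$. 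Both integrals are thus bounded between positive constants, and undoing the scaling finishes the proof.

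The main obstacle is spotting the algebraic identity $3/8 + 2y^2/3 - y = (4y-3)^2/24$, which expresses that the Cauchy-Schwarz and Young inequalities jointly saturate precisely at the Thomas-Fermi amplitude $u^{TF} = \sqrt{3/4}\,\mathds{1}_\Omega$. Once this is noticed, the energy gap from Lemma~\ref{TLer} mechanically controls $\int|w_L|^4$ and $\int|w_L|^6$ from both sides, and no further compactness or regularity input is needed.
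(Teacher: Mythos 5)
Your proof is correct, but it follows a genuinely different route from the paper's. The paper argues more crudely: from the negativity of the energy it first gets $\tfrac16\int|\varPhi_L|^6\le\tfrac14\int|\varPhi_L|^4$, combines this with H\"older against the mass constraint to obtain the upper bounds $\int|\varPhi_L|^6\le\tfrac94\rho L^d|\mathcal{D}|$ and $\int|\varPhi_L|^4\le\tfrac32\rho L^d|\mathcal{D}|$, and then extracts the lower bound on $\int|\varPhi_L|^4$ from $-\tfrac14\int|\varPhi_L|^4\le e(\mathcal{D}_L,\rho L^d|\mathcal{D}|)\lesssim-\rho L^d|\mathcal{D}|$ (the lower bound on $\int|\varPhi_L|^6$ then follows from H\"older again). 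That argument only needs the order of magnitude of the energy, not the sharp constant $-3/32$. Your argument instead exploits the sharp constant through the identity $\tfrac38+\tfrac23y^2-y=\tfrac{(4y-3)^2}{24}$, writing $E_L(w_L)=\tfrac{L^{-2}}{2(\rho|\mathcal{D}|)^{2/d}}\int|\nabla w_L|^2-\tfrac3{32}+\tfrac14\delta_L$ and squeezing the Young defect $\delta_L$ with the energy gap from Lemma \ref{TLer}. This buys strictly more than the lemma asks for: you obtain the precise limits $\int_{\mathcal{D}_0}|w_L|^6\to\tfrac9{16}$ and $\int_{\mathcal{D}_0}|w_L|^4\to\tfrac34$ on a general bounded domain, limits which the paper only recovers later (in \eqref{4311} and \eqref{36}) for the spherical case via the Pohozaev identity \eqref{ph32} and the exponential decay estimate \eqref{433333}. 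All steps check out (the nonnegativity of $\delta_L$, the algebra of the substitution, and the passage from $E_L(w_L)\le-\tfrac3{32}+O(L^{-(d+1)})$ via \eqref{T211}), so your proof is a valid and in fact quantitatively sharper alternative.
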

\begin{pf}
It follows from Lemma \ref{TLer} that 
\begin{align*}
    \frac{1}{6}\int_{\mathcal{D}_L}|\varPhi_N|^6\,dx\leq\frac{1}{4}\int_{\mathcal{D}_L}|\varPhi_N|^4\,dx.
\end{align*}
The H\"older's inequality yields
\begin{gather*}
    \begin{aligned}
        \frac{1}{4}\int_{\mathcal{D}_L}|\varPhi_N|^4\,dx\leq\frac{(\rho L^d|\mathcal{D}|)^{1/2}}{4}\left(\int_{\mathcal{D}_L}|\varPhi_N|^6\,dx\right)^{1/2}.
    \end{aligned}
\end{gather*}
From the above two inequalities, we get
\begin{align*}
    \int_{\mathcal{D}_L}|\varPhi_N|^6\,dx\leq \frac{9}{4}\rho L^d|\mathcal{D}|.
\end{align*}
Then, we have
\begin{gather}\label{231}
    \begin{aligned}
\int_{\mathcal{D}_L}|\varPhi_N|^4\,dx\leq (\rho L^d|\mathcal{D}|)^{1/2}\left(\int_{\mathcal{D}_L}|\varPhi_N|^6\,dx\right)^{1/2}\leq\frac{3}{2}\rho L^d|\mathcal{D}|.
    \end{aligned}
\end{gather}
On the other hand, we deduce from Lemma \ref{TLer} that
\begin{gather*}
    \begin{aligned}
        -\frac{1}{4}\int_{\mathcal{D}_L}|\varPhi_N|^4\,dx\leq e(\mathcal{D}_L,\rho L^d|\mathcal{D}|)\lesssim-\rho L^d|\mathcal{D}|.
\end{aligned}
\end{gather*}
Thus, we have
\begin{gather*}
    \begin{aligned}
    \int_{\mathcal{D}_L}|\varPhi_N|^4\,dx\gtrsim \rho L^d|\mathcal{D}|.
\end{aligned}
\end{gather*}
Combining \eqref{231}, we complete the proof of this lemma.
\end{pf}
\subsection{Energy estimates on the spherical domain}
In this subsection, we consider $\mathcal{D}=B(0,R)$, where $R>0$, we then have $\mathcal{D}_L=\{Lx:x\in B(0,R)\}=B(0,LR)$. From Theorem \ref{existence-boun}, it is known that the ground state $\varPhi_L(x)$ of $e(\mathcal{D}_L,\rho L^d|\mathcal{D}|)$ is non-negative radially symmetric and decreasing function. For the sake of simplicity, define 
\begin{align*}
	 w_L(x)=\varPhi_L(\sqrt[d]{\rho|\mathcal{D}|}L{x}),\quad x\in\mathcal{D}_0=B\left(0,\sqrt[d]{d/(\rho\omega_d)}\right).
\end{align*}
 Note that 
\begin{gather*}
    \begin{aligned}
    \int_{\mathcal{D}_0}|w_L(x)|^2\,dx=(\rho L^d|\mathcal{D}|)^{-1}\int_{\mathcal{D}_L}|\varPhi_L({x})|^2\,dx=1.
    \end{aligned}
\end{gather*}
We deduce from Lemma \ref{lem22} that
\begin{gather}\label{29}
    \begin{aligned}
   \int_{\mathcal{D}_0}|w_L(x)|^4\,dx\sim \int_{\mathcal{D}_0}|w_L(x)|^6\,dx\sim1\qquad\text{as}\ N\to+\infty.
    \end{aligned}
\end{gather}
Now, we introduce the following constraint minimization problem
\begin{gather}\label{min1}
    \begin{aligned}
   e_L(\mathcal{D}_0,1):=\inf\left\{{E}_{L}(u): u\in H^{1}_0(\mathcal{D}_0), \int_{\mathcal{D}_0}|u|^2\,dx =1\right\},
    \end{aligned}
\end{gather}
where the energy functional ${E}_{L}$ be given by
\begin{gather*}
    \begin{aligned}
    E_L(u)=\frac{L^{-2}}{2(\rho|\mathcal{D}|)^{\frac{2}{d}}}\int_{ \mathcal{D}_0}|\nabla u|^2\,dx-\frac{1}{4}\int_{\mathcal{D}_0}|u|^{4}\,dx+\frac{1}{6}\int_{ \mathcal{D}_0}|u|^6\,dx.
    \end{aligned}
\end{gather*}
 Thus, $w_L$ is also non-negative radially symmetric ground state of $ e_L(\mathcal{D}_0,1)$, i.e.,
\begin{gather}\label{45}
    \begin{aligned}
	e(\mathcal{D}_L,\rho L^d|\mathcal{D}|)=E(\varPhi_L)=\rho L^d|\mathcal{D}|E_L(w_L)=\rho L^d|\mathcal{D}|e_L(\mathcal{D}_0,1).
    \end{aligned}
\end{gather}
And $w_L$ satisfies the following Euler-Lagrange equation
\begin{gather}\label{hEL}
    \begin{aligned}
    -\frac{L^{-2}}{(\rho|\mathcal{D}|)^{\frac{2}{d}}}\Delta  w_L(x)-  w_L^3(x)+w_L^5(x)=\mu_L w_L(x),\  x\in\mathcal{D}_0,
    \end{aligned}
\end{gather}
where $\mu_L\in\mathbb{R}$ is a Lagrange multiplier associated to $w_L$.	

Now assume that $\Omega=B(0,\sqrt[d]{md/\omega_d})$ such that  $m =\min\{\rho^{-1}, 4/3\}$ and taking
\begin{align}\label{38}
    u^{TF}(x)=\sqrt{\frac{\mathds{1}_{\Omega}}{m}}(x),\quad x\in\mathcal{D}_0=B\left(0,\sqrt[d]{d/(\rho\omega_d)}\right),
\end{align}
is the unique radially symmetric non-increasing minimizer of \eqref{minTF}.
Here $\mathds{1}_\Omega(x)$ denotes the characteristic function for a Borel set $\Omega\subseteq\mathcal{D}_0$ and $\omega_d$ denotes the surface area of the unit sphere in $\mathbb{R}^d$. In fact, note that $m=\min\{\rho^{-1}, 4/3\}$ and $|\mathcal{D}_0|=\rho^{-1}\geq1$, then, from Lemma \ref{A}, this assumption holds. Moreover, $u^{TF}$ satisfies the following Euler-Lagrange equation
\begin{gather}\label{357}
    \begin{aligned}
    \big(u^{TF}(x)\big)^5-\big(u^{TF}(x)\big)^3=\mu^{TF} u^{TF}(x),\quad x\in\mathcal{D}_0,
    \end{aligned}
\end{gather}
where $\mu^{TF}\in\mathbb{R}$ is a Lagrange multiplier and its value can be obtained, that is,
\begin{gather*}
    \begin{aligned}
		\mu^{TF}=\big(u^{TF}\big)^4-\big(u^{TF}\big)^2=-\left(1-\frac{1}{m}\right)\frac{1}{m}.
    \end{aligned}
\end{gather*}
\begin{re}
From \eqref{38}, by the direct calculation, we deduce that
\begin{align*}
    \int_{\mathcal{D}_0}|u^{TF}|^4\,dx=m\int_{\mathcal{D}_0}|u^{TF}|^6\,dx.
\end{align*}
Combining \eqref{357}, we get
\begin{align}\label{347}
    e^{TF}(\mathcal{D}_0,1)=-\left(\frac{1}{4}-\frac{1}{6m}\right)\int_{\mathcal{D}_0}|u^{TF}|^4\,dx
\end{align}
and
\begin{gather}\label{348}
    \begin{aligned}
		\mu^{TF}=-\left(1-\frac{1}{m}\right)\int_{\mathcal{D}_0}|u^{TF}|^4\,dx
        .
    \end{aligned}
\end{gather}
\end{re}

\begin{lem}\label{lem41}
Assume that $d=1,2,3$, $\mathcal{D}_0=B(0,\sqrt[d]{d/(\rho\omega_d)})$ and $0<\rho\leq1$ is a fixed parameter. Then
\begin{gather*}
    \begin{aligned}
    0\leq e_L(\mathcal{D}_0,1)-e^{TF}(\mathcal{D}_0,1)\leq {O}(L^{-1})\quad\text{as }L\to+\infty.
    \end{aligned}
\end{gather*}	
Moreover, if $w_L$ is a minimizer of $ e_L(\mathcal{D}_0,1)$, it holds that
\begin{gather}\label{43}
	\begin{aligned}
		\int_{\mathcal{D}_0}|\nabla w_L|^2\,dx\leq O(L)\quad\text{as }L\to+\infty.
	\end{aligned}
\end{gather}
\end{lem}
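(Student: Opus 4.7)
The plan is to treat both assertions as straightforward consequences of the decomposition
\[
E_L(u) = \frac{L^{-2}}{2(\rho|\mathcal{D}|)^{\frac{2}{d}}} \int_{\mathcal{D}_0} |\nabla u|^2\,dx + E^{TF}(u),
\]
combined with the test-function construction already carried out in the proof of Lemma \ref{TLer}. So essentially the lemma is a repackaging of what has been done, plus one routine rearrangement.

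First I would establish the lower bound $\mathcal{E}(L) \geq \mathcal{E}^{TF}(\mathcal{D}_0)$. Since the kinetic contribution is non-negative, every admissible $u \in H^1_0(\mathcal{D}_0)$ with $\|u\|_2 = 1$ satisfies $E_L(u) \geq E^{TF}(u) \geq \mathcal{E}^{TF}(\mathcal{D}_0)$, and taking the infimum over such $u$ yields the claim. For the upper bound, I would feed the cutoff test function $u^{TF}_L = A_L \eta_L u^{TF}$ from \eqref{5211} into $E_L$. The estimates \eqref{hnable31} and \eqref{4231} already carried out there give
\[
E_L(u^{TF}_L) \leq E^{TF}(u^{TF}) + O(f(L)) + O\!\left(\frac{L^{-2}}{f(L)}\right),
\]
so optimizing $f(L) = L^{-1}$ produces the rate $O(L^{-1})$, and hence $\mathcal{E}(L) \leq E_L(u^{TF}_L) \leq \mathcal{E}^{TF}(\mathcal{D}_0) + O(L^{-1})$.

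The gradient bound then follows almost for free. Inserting a minimizer $w_L$ into the decomposition and using $E^{TF}(w_L) \geq \mathcal{E}^{TF}(\mathcal{D}_0)$ yields
\[
\frac{L^{-2}}{2(\rho|\mathcal{D}|)^{\frac{2}{d}}} \int_{\mathcal{D}_0} |\nabla w_L|^2\,dx \leq \mathcal{E}(L) - \mathcal{E}^{TF}(\mathcal{D}_0) \leq O(L^{-1}),
\]
and multiplying by $2(\rho|\mathcal{D}|)^{2/d} L^2$ gives exactly \eqref{43}.

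There is no serious obstacle to confront; the main thing to check before reusing the construction from Lemma \ref{TLer} is the containment $\Omega \subseteq \mathcal{D}_0$ needed to make $u^{TF}_L$ well defined. This is precisely where the density constraint $0 < \rho \leq 3/4$ enters, since it is equivalent to $|\mathcal{D}_0| = \rho^{-1} \geq 4/3 = |\Omega|$, and together with Lemma \ref{A} guarantees that $u^{TF}$ can be realized inside $\mathcal{D}_0$ with the explicit form \eqref{38}.
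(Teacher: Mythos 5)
Your proposal is correct and follows essentially the same route as the paper: the lower bound from dropping the non-negative kinetic term, the upper bound by reusing the cutoff test function $u^{TF}_L$ from Lemma \ref{TLer} with $f(L)=L^{-1}$, and the gradient bound from the identity $\frac{L^{-2}}{2(\rho|\mathcal{D}|)^{2/d}}\int_{\mathcal{D}_0}|\nabla w_L|^2\,dx=\mathcal{E}(L)-E^{TF}(w_L)\leq\mathcal{E}(L)-\mathcal{E}^{TF}(\mathcal{D}_0)$. Your explicit check that $\rho\leq 3/4$ guarantees $|\mathcal{D}_0|\geq 4/3$ so that $\Omega$ fits inside $\mathcal{D}_0$ matches the paper's use of Lemma \ref{A}.
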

\begin{pf}
From the proof of Lemma \ref{TLer}, \eqref{min1} and \eqref{45}, we have
\begin{gather*}
    \begin{aligned}
	0\leq e_L(\mathcal{D}_0,1)-e^{TF}(\mathcal{D}_0,1)\leq {O}(L^{-1})\quad\text{as }L\to+\infty.
    \end{aligned}
\end{gather*}	
In addition, if $w_L$ is a non-negative minimizer of $ e_L(\mathcal{D}_0,1)$, then
\begin{gather*}
    \begin{aligned}
    \frac{L^{-2}}{2(\rho|\mathcal{D}|)^{\frac{2}{d}}}\int_{\mathcal{D}_0}|\nabla w_L|^2\,dx&= e_L(\mathcal{D}_0,1)+\frac{1}{4}\int_{\mathcal{D}_0}|w_L|^{4}\,dx-\frac{1}{6}\int_{\mathcal{D}_0}|w_L|^6\,dx\\
    &=e_L(\mathcal{D}_0,1)-E^{TF}(w_L).
    \end{aligned}
\end{gather*}
By Lemma \ref{A} and  $u^{TF}$ is  a minimizer of $e^{TF}(\mathcal{D}_0,1)$. Thus, we obtain
\begin{gather*}
    \begin{aligned}
	\frac{L^{-2}}{2(\rho|\mathcal{D}|)^{\frac{2}{d}}}\int_{\mathcal{D}_0}|\nabla w_L|^2\,dx\leq e_L(\mathcal{D}_0,1)-e^{TF}(\mathcal{D}_0,1)\leq {O}(L^{-1})\quad\text{as }L\to+\infty.
    \end{aligned}
\end{gather*}
Therefore, \eqref{43} holds and the proof of this lemma is finished.
\end{pf}

\subsection{Proof of Theorem \ref{Thermodynamic-limit}}
In order to Theorem \ref{Thermodynamic-limit}, we need to give the asymptotic behavior of the Lagrange multiplier $\mu_L$. Now, we given the following Pohozaev-type identity, from Lemma \ref{BP}, for $\mathcal{D}_0=B(0,\sqrt[d]{d/(\rho\omega_d)})$, we have $\nu=x/|x|$   and 
\begin{gather}\label{Pb}
    \begin{aligned}
    \frac{L^{-2}}{2(\rho|\mathcal{D}|)^{\frac{2}{d}}}\int_{\partial\mathcal{D}_0}|x||\nabla   w_L|^2&\,dS+\frac{(d-2)L^{-2}}{2(\rho|\mathcal{D}|)^{\frac{2}{d}}}\int_{\mathcal{D}_0}|\nabla w_L|^2\,dx\\
    &=\frac{d}{4}\int_{\mathcal{D}_0}|w_L|^4\,dx-\frac{d}{6}\int_{\mathcal{D}_0}|w_L|^6\,dx+\frac{d\mu_L}{2}\int_{\mathcal{D}_0}|w_L|^2\,dx.
    \end{aligned}
\end{gather}
Moreover, we deduce  from \eqref{hEL} and \eqref{Pb} that
\begin{gather*}
	\begin{aligned}
	\frac{L^{-2}}{2d(\rho|\mathcal{D}|)^{\frac{2}{d}}}\int_{\partial\mathcal{D}_0}|x||\nabla w_L|^2&\,dS-\frac{L^{-2}}{d(\rho|\mathcal{D}|)^{\frac{2}{d}}}\int_{\mathcal{D}_0}|\nabla   w_L|^2\,dx\\
    &=\frac{1}{3}\int_{ \mathcal{D}_0}| w_L|^6\,dx-\frac{1}{4}\int_{ \mathcal{D}_0}| w_L|^4\,dx.
	\end{aligned}
\end{gather*}
Hence
\begin{gather}\label{325}
	\begin{aligned}
	 e_L(\mathcal{D}_0,1)= \frac{L^{-2}}{4d(\rho|\mathcal{D}|)^{\frac{2}{d}}}\int_{\partial\mathcal{D}_0}|x||\nabla   w_L|^2dS+\frac{(d-1)L^{-2}}{2d(\rho|\mathcal{D}|)^{\frac{2}{d}}}\int_{\mathcal{D}_0}|\nabla w_L|^2dx -\frac{1}{8}\int_{\mathcal{D}_0}|w_L|^4dx.
	\end{aligned}
\end{gather}
The corresponding Lagrange multiplier $\mu_L$ can be computed as
\begin{gather}\label{326}
	\begin{aligned}
		\mu_L= \frac{3L^{-2}}{2d(\rho|\mathcal{D}|)^{\frac{2}{d}}}\int_{\partial\mathcal{D}_0}|x||\nabla   w_L|^2\,dS+\frac{(d-3)L^{-2}}{d(\rho|\mathcal{D}|)^{\frac{2}{d}}}\int_{\mathcal{D}_0}|\nabla w_L|^2\,dx -\frac{1}{4}\int_{\mathcal{D}_0}|w_L|^4\,dx.
	\end{aligned}
\end{gather}

\begin{proof}[\textbf{Proof of Theorem \ref{Thermodynamic-limit}}:] Note that \eqref{TDL} in Theorem \ref{Thermodynamic-limit} comes from  Lemma \ref{TLer}.  Next, we prove \eqref{13} holds for $0<\rho\leq1$, $\mathcal{D}=B(0,R)$ and $\mathcal{D}_0=B(0,\sqrt[d]{d/(\rho\omega_d)})$.

We deduce from \eqref{similarly1} that
\begin{gather*}
	\begin{aligned}
		e^{TF}(\mathcal{D}_0,1)\leq-\frac{1}{4}\int_{\mathcal{D}_0}|w_L|^{4}\,dx+\frac{1}{6}\int_{\mathcal{D}_0}|w_L|^6\,dx\leq e^{TF}(\mathcal{D}_0,1)+O(L^{-1}),
	\end{aligned}
\end{gather*}
which means that $\{w_L\}$ is a minimizing sequence for $e^{TF}(\mathcal{D}_0,1)$. 
Note that $w_{L}$ is a non-negative radially symmetric decreasing  function, and $\{w_{L}\}$ is bounded in $L^2(\mathcal{D}_0)$ and $L^6(\mathcal{D}_0)$, see \eqref{29}. Then, we have the following estimate
 \begin{gather}\label{w-tau-b}
    \begin{aligned}
    w_L(|x|)\leq U(|x|)=:C\min\left\lbrace |x|^{-\frac{d}{2}}, |x|^{-\frac{d}{6}}\right\rbrace\quad \text{for all } 0<|x|<\sqrt[d]{d/(\rho\omega_d)}.
    \end{aligned}
 \end{gather}
Indeed, for any $x\in\mathcal{D}_0$ and $x\not=0$ and $s\in[2,6]$,
\begin{gather}\notag
    \begin{aligned}
    |w_L(x)|^s|x|^d=d\int_{0}^{|x|}|w_L(x)|^sr^{d-1}\,dr= d\int_{0}^{|x|}|w_L(r)|^sr^{d-1}\,dr\leq C\int_{B_{|x|}(0)}|w_L(x)|^s\,dx\leq C.
    \end{aligned}
\end{gather}
By Helly’s selection theorem and \eqref{w-tau-b}, for monotone functions, there exists a non-negative radially symmetric decreasing function $w_0(|x|)\leq U(|x|)$ such that, up to a subsequence,
\begin{gather*}
	\begin{aligned}
		w_L(x)\to w_0(x)\quad  \text{a.e. in }\mathcal{D}_0\quad\text{as }L\to+\infty.
		\end{aligned}
	\end{gather*}
Notice that $U\in L^p(\mathcal{D}_0)$ for all $p\in(2,6)$, by the Lebesgue’s dominated convergence, we conclude that for every $p\in(2,6)$,
\begin{gather}\label{3651}
	\begin{aligned}
	\lim_{L\to+\infty}\int_{\mathcal{D}_0}|w_L|^p\,dx=\int_{\mathcal{D}_0}|w_0|^p\,dx.
	\end{aligned}
\end{gather}
Combining the lower semicontinuity of $L^6$-norm, we get
\begin{gather}\label{Ho3}
	\begin{aligned}
	E^{TF}(w_0)\leq  e^{TF}(\mathcal{D}_0,1).
		\end{aligned}
	\end{gather}
Next, we prove $\int_{\mathcal{D}_0}|w_0|^2\,dx\equiv1$. Let $t:=\int_{\mathcal{D}_0}|w_0|^2\,dx$ and $0\leq t<1$. For $0<\rho\leq3/4$, by the H\"older's inequality and Young's inequality, we get 
\begin{gather}\label{}
    \begin{aligned}
    \int_{\mathcal{D}_0}|w_0|^4\,dx
    \leq\frac{3}{8}\int_{\mathcal{D}_0}|w_0|^2\,dx+\frac{2}{3}\int_{\mathcal{D}_0}|w_0|^6\,dx.
    \end{aligned}
\end{gather}
Hence, by \eqref{Ho3} and Lemma \ref{A}, we have
\begin{gather}\label{}
	\begin{aligned}
	e^{TF}(\mathcal{D}_0,1)\geq E^{TF}(w_0)\geq -\frac{3}{32}t> -\frac{3}{32}= e^{TF}(\mathcal{D}_0,1),
	\end{aligned}
\end{gather}
which is a contradiction.

For $3/4<\rho<1$,  by the H\"older's inequality, we get
\begin{gather*}
    \begin{aligned}
   \int_{\mathcal{D}_0}|w_0|^4\,dx\leq\left(\int_{\mathcal{D}_0}\,dx\right)^{\frac{4\rho-3}{3}}\left(\int_{\mathcal{D}_0}|w_0|^2\,dx\right)^{2(1-\rho)}\left(\int_{\mathcal{D}_0}|w_0|^6\,dx\right)^{\frac{2\rho}{3}}.
    \end{aligned}
\end{gather*}
From $|\mathcal{D}_0|=\rho^{-1}$ and Young's inequality, we get
\begin{gather}\label{Ho4}
    \begin{aligned}
    \int_{\mathcal{D}_0}|w_0|^4\,dx&\leq \left(\rho^{-1}\right)^{\frac{4\rho-3}{3}}\left(\int_{\mathcal{D}_0}|w_0|^2\,dx\right)^{2(1-\rho)}\left(\int_{\mathcal{D}_0}|w_0|^6\,dx\right)^{\frac{2\rho}{3}}\\
    &\leq\frac{3-2\rho}{3}\rho\left(\int_{\mathcal{D}_0}|w_0|^2\,dx\right)^{\frac{6(1-\rho)}{3-2\rho}}+\frac{2}{3}\int_{\mathcal{D}_0}|w_0|^6\,dx.
    \end{aligned}
\end{gather}
Note that $|\Omega|=\rho^{-1}$ for $3/4<\rho<1$. Hence, combining \eqref{Ho3}, \eqref{Ho4} and Lemma \ref{A}, we have
\begin{gather}\label{}
	\begin{aligned}
	e^{TF}(\mathcal{D}_0,1)\geq E^{TF}(w_0)\geq -\frac{3-2\rho}{12}\rho t^{\frac{6(1-\rho)}{3-2\rho}}> -\left(\frac{1}{2}-\frac{1}{3|\Omega|}\right)\frac{1}{2|\Omega|}= e^{TF}(\mathcal{D}_0,1),
	\end{aligned}
\end{gather}
which is a contradiction. 
Thus, $\int_{\mathcal{D}_0}|w_0|^2\,dx\equiv1$ and $w_0$ is a minimizer of $e^{TF}(\mathcal{D}_0,1)$.

Note that $w_0$ is a non-negative radially symmetric decreasing function. Using Lemma \ref{A},  the minimizer of $e^{TF}(\mathcal{D}_0,1)$ is unique when we restrict $u^{TF}$ to a radially symmetric non-increasing function. Thus, $w_0= u^{TF}$. Therefore, we get that
\begin{align}\label{3555}
w_{L_k}(x) \to u^{TF}(x) \ \ \text{strongly in}\  L^2\cap L^6(\mathcal{D}_0).
\end{align}	
Thus,  \eqref{115} holds and we complete the proof of Theorem \ref{Thermodynamic-limit}.
\end{proof}
\subsection{Proof of Theorem \ref{convergence}}
In this subsection, we aim to prove Theorem \ref{convergence}. First, we show the following interpolation estimates that in the spirit of the Gagliardo-Nirenberg inequality (see e.g. \cite{Nirenbergelliptic} or \cite[Lemma A.1]{CV1993Asymptotics}).
\begin{pro}\label{nablees}
Assume $u$ satisfies
\begin{align*}
	-\Delta u=f\quad\text{in}\ A\subset\mathbb{R}^d.
\end{align*}
Then
\begin{align*}
	|\nabla u(x)|^2\leq
	C\left(\|f\|_{L^\infty(A)}\|u\|_{L^\infty(A)}+\frac{1}{dist^2(x,\partial A)}\|u\|_{L^\infty(A)}^2 \right)
	\quad\text{in}\ A\subset\mathbb{R}^d,
\end{align*}
where $C$ is a constant depending only on $d$.
\end{pro}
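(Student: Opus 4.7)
The plan is to reduce to a standard interior gradient estimate on a ball and then tune the radius of that ball. Fix $x\in\Omega$ and set $R:=\mathrm{dist}(x,\partial\Omega)$, so that $B_r(x)\subset\Omega$ for every $r\in(0,R]$. On such a ball, write $u=v+w$, where $v$ solves the Dirichlet problem $-\Delta v=f$ in $B_r(x)$, $v=0$ on $\partial B_r(x)$, and $w$ is harmonic in $B_r(x)$ with $w=u$ on $\partial B_r(x)$. Both pieces have well-known pointwise gradient bounds, and the desired estimate will follow from balancing them against each other.

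For the Newtonian-potential part $v$, one represents $v$ through the Green's function of $B_r(x)$. Differentiating under the integral and estimating the kernel $|\nabla_x G(x,y)|$ by $C_d|x-y|^{1-d}$ (with the appropriate modification in $d=1,2$) gives
\[
|\nabla v(x)|\le C_d\,r\,\|f\|_{L^\infty(\Omega)}.
\]
For the harmonic part $w$, the maximum principle forces $\|w\|_{L^\infty(B_r(x))}\le\|u\|_{L^\infty(\Omega)}$, and the classical interior gradient estimate for harmonic functions then yields
\[
|\nabla w(x)|\le\frac{C_d}{r}\|w\|_{L^\infty(B_r(x))}\le\frac{C_d}{r}\|u\|_{L^\infty(\Omega)}.
\]
Adding the two bounds and squaring, I obtain
\[
|\nabla u(x)|^2\le C_d\Bigl(r^2\|f\|_{L^\infty(\Omega)}^2+r^{-2}\|u\|_{L^\infty(\Omega)}^2\Bigr)\qquad\text{for every }r\in(0,R].
\]

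The final step is to optimize the free parameter $r$. The unconstrained minimum of the right-hand side occurs at $r_\star=\bigl(\|u\|_{L^\infty(\Omega)}/\|f\|_{L^\infty(\Omega)}\bigr)^{1/2}$ (with the convention that $r_\star=+\infty$ when $f\equiv 0$), at which value both terms equal $\|f\|_{L^\infty(\Omega)}\|u\|_{L^\infty(\Omega)}$. If $r_\star\le R$ I choose $r=r_\star$ and get $|\nabla u(x)|^2\le 2C_d\|f\|_{L^\infty(\Omega)}\|u\|_{L^\infty(\Omega)}$. If instead $r_\star>R$, I choose $r=R$; the inequality $R<r_\star$ then gives $R^2\|f\|_{L^\infty(\Omega)}^2\le \|f\|_{L^\infty(\Omega)}\|u\|_{L^\infty(\Omega)}$, and the gradient-of-harmonic-function term is absorbed into $R^{-2}\|u\|_{L^\infty(\Omega)}^2$. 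Either way, the claimed bound holds with a constant depending only on $d$.

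The proof is essentially routine once the decomposition is set up, so there is no serious obstacle; the only mildly delicate point is to keep the constant dimension-independent of $\Omega$ by working on the intrinsic ball $B_r(x)$ rather than on $\Omega$ itself, and to make sure the Green's-function bound for $v$ is stated correctly in low dimensions. Both ingredients are standard (see, e.g., the interior gradient estimate in \cite{GT97}), so only a brief verification will be required in the write-up.
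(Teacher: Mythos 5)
Your proof is correct. The paper itself offers no proof of Proposition \ref{nablees} --- it simply cites \cite{Nirenbergelliptic} and \cite[Lemma A.1]{CV1993Asymptotics} --- and your decomposition $u=v+w$ on the intrinsic ball $B_r(x)$ (Newtonian-potential bound for $v$, interior gradient estimate for the harmonic part $w$) followed by optimization over $r\in(0,\mathrm{dist}(x,\partial\Omega)]$ is exactly the standard argument behind that cited lemma, so nothing further is needed.
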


\begin{lem}\label{lem44}
Assume $0<\rho<3/4$, $\mathcal{D}=B(0,R) \subset \mathbb{R}^{d}$  $(R>0$ and $d=1,2,3)$ and $L>0$. As Theorem \ref{Thermodynamic-limit}, let   $\mathcal{D}_L=B(0,LR)$, $\Omega=B(0,\sqrt[d]{4d/(3\omega_d)})$,  and $\mathcal{D}_0=B(0,\sqrt[d]{d/(\rho\omega_d)})$.
Suppose that $w_L(x)$ is a non-negative radially symmetric decreasing minimizer of $ e_L(\mathcal{D}_0,1)$, 
we then have
\begin{align}\label{60}
	\|\Delta w_L(x)\|_{L^\infty\left(\Omega\right)}\leq O\big(L^{\frac{4}{3}}\big)\quad\text{as}\ L\to+\infty.
\end{align}
\end{lem}
\begin{pf}
Rewrite the equation \eqref{hEL} as
\begin{align}\label{61}
	-\Delta  w_L(x)=f(x)\quad\text{in}\ \mathcal{D}_0=B\left(0,\sqrt[d]{d/(\rho\omega_d)}\right),
\end{align}
where
\begin{gather*}
	\begin{aligned}
		f(x):=(\rho|\mathcal{D}|)^{\frac{2}{d}}L^{2}\big[w_L^3(x)-w_L^5(x)+\mu_L w_L(x)\big].
	\end{aligned}
\end{gather*}
Note that $w_{L}$ is a radially symmetric decreasing  function, thus $0$ is the maximum point of $w_L$.  From  \eqref{29} that
	\begin{gather*}
		\begin{aligned}
			1\lesssim\int_{\mathcal{D}_0}|w_L|^4\,dx\leq w^{2}_L(0)\int_{\mathcal{D}_0}|w_L|^2\,dx= w^{2}_L(0).
		\end{aligned}
	\end{gather*}
	This implies that
	\begin{gather}\label{wtau1}
		\begin{aligned}
			w_L(0)\gtrsim 1.
		\end{aligned}
	\end{gather}
On the other hand, since $0$ is a minimax point of $w_L$, we then have  $-\Delta w_L(0)\geq0$. From \eqref{hEL} and \eqref{339}, we get
	\begin{gather*}
		\begin{aligned}
			w_L^5(0)\leq w_L^3(0),
		\end{aligned}
	\end{gather*}
which implies
	\begin{gather}\label{371}
		\begin{aligned}
			w_L(0)\leq1.
		\end{aligned}
	\end{gather}
Then, \eqref{wtau1} and \eqref{371} yields
	\begin{gather}\label{366}
		\begin{aligned}
			\|w_L\|_{L^\infty(\mathcal{D}_0)}\sim1.
		\end{aligned}
	\end{gather}
Combining Lemma \ref{lem36}, we deduce that
\begin{gather}\label{343}
	\begin{aligned}
	\|f\|_{L^\infty(\mathcal{D}_0)}\lesssim L^{2}\quad\text{as}\ L\to+\infty.
	\end{aligned}
\end{gather}

Let  $r_0:=\sqrt[d]{(4d)/(3\omega_d)}$ and 
$$\delta_1:=\frac{\sqrt[d]{d/(\rho\omega_d)}-\sqrt[d]{(4d)/(3\omega_d)}}{4}>0.$$
Note that $B(0,r_0+3\delta_1)\subset\mathcal{D}_0=B\left(0,\sqrt[d]{d/(\rho\omega_d)}\right)$ for any $0<\rho<3/4$. Moreover, using \eqref{343} and Proposition \ref{nablees}, we have
		\begin{align}\label{453}
			|\nabla w_L(x)|^2\leq
			C\left(L^2+\frac{1}{dist^2\big(x,\partial B(0,r_0+2\delta_1)\big)} \right)
			\quad\text{in}\ B(0,r_0+3\delta_1),
		\end{align}
where $C>0$ is a constant independent of $L$. Therefore, for any $x\in B(0,r_0+2\delta_1)$, taking $L>\delta_1^{-1}$,  we obtain $L^{-1}< dist\big(x,\partial B(0,r_0+3\delta_1)\big)$. Thus, from \eqref{453} that, for any $x\in B(0,r_0+2\delta_1)$,
\begin{align}\label{63}
    |\nabla w_L(x)|\lesssim L\quad\text{as}\ L\to+\infty.
\end{align}
		
For any $x\in B(0,r_0+2\delta_1)$, by \eqref{61}, we get
\begin{align*}
    -\Delta(\nabla w_L)= (\rho|\mathcal{D}|)^{\frac{2}{d}}L^{2}\left(3w_L^2-5w_L^4+\mu_L \right)\nabla w_L
\end{align*}
It follow from Lemma \ref{lem36}, \eqref{366}  and \eqref{63} that
\begin{align*}
    \|(\rho|\mathcal{D}|)^{\frac{2}{d}}L^{2}\left(3w_L^2-5w_L^4+\mu_L \right)\nabla w_L\|_{L^\infty\big(B(0,r_0+2\delta_1)\big) }\lesssim L^{3}\quad\text{as}\ L\to+\infty.
\end{align*}
Again using Proposition \ref{nablees} we get
\begin{align*}
    \big|\nabla (\nabla w_L)\big|^2\leq C\left(L^3+\frac{1}{dist^2\big(x,\partial B(0,r_0+2\delta_1)\big)} \right)\leq C{L^3}\quad\text{in}\ B(0,r_0+2\delta_1).
\end{align*}
Hence, for any $x\in B(0,r_0+\delta_1)$, we obtain
\begin{align}\label{z1}
    |\Delta w_L(x)|\lesssim L^{\frac{3}{2}}=:L^{a_1}\quad\text{as}\ L\to+\infty.
\end{align}

Let $\delta_2:={\delta_1}/{4}>0$. Combining  \eqref{61} and \eqref{z1}, we deduce that
\begin{gather*}
	\begin{aligned}
	\|f\|_{L^\infty(B(0,r_0+\delta_1))}\lesssim L^{\frac{3}{2}}\quad\text{as}\ L\to+\infty.
	\end{aligned}
\end{gather*} 
By repeating the above process, for any $x\in B(0,r_0+\delta_2)$, we can obtain
\begin{align*}
    |\Delta w_L(x)|\lesssim L^{\frac{3}{8}+1}=:L^{a_2}\quad\text{as}\ L\to+\infty.
\end{align*}
Let $\delta_3:={\delta_2}/{4}>0$. Once again, by repeating the above process, for any $x\in B(0,r_0+\delta_3)$, we have
\begin{align*}
    |\Delta w_L(x)|\lesssim L^{\frac{\frac{3}{8}+1}{4}+1}=:L^{a_3}\quad\text{as}\ L\to+\infty.
\end{align*} 
Moreover, let $n=4,5,...$, and $\delta_n:={\delta_{n-1}}/{4}>0$, by repeating the above process, for any $x\in B(0,r_0+\delta_n)$, we get
\begin{align}\label{v1}
    |\Delta w_L(x)|\lesssim L^{a_n}=L^{\frac{a_{n-1}}{4}+1}\quad\text{as}\ L\to+\infty.
\end{align} 
Hence, we obtain iterative sequence $\{a_n\}$ satisfying  
$${a_n}={\frac{1}{4}a_{n-1}+1}\quad\text{and}\quad a_1=\frac{3}{2}.$$
Note that $\{a_n\}$ is an arithmetic sequence with respect to the positive integer $n$ and it satisfies
$$a_n=\frac{4}{3}+\frac{1}{6\times4^{n-1}}\to \frac{4}{3}.$$
Taking $n\to \infty$ in \eqref{v1}, then \eqref{60} holds and we complete the proof of Lemma \ref{lem44}.
\end{pf}

Next, we give the following relationship between $\mu_L$ and $\mu^{TF}$.
\begin{lem}\label{lem36}
Let $0<\rho<3/4$, we then have
	\begin{gather}\label{339}
		\begin{aligned}
			|\mu_L-\mu^{TF}|\leq {O}(L^{-1})\quad\text{as }L\to+\infty,
		\end{aligned}
	\end{gather}
where $\mu_L$ and $\mu^{TF}$ be defined by \eqref{hEL} and \eqref{357}, respectively.
\end{lem}
\begin{pf}
Using $w_L\in C^2$ (by the standard elliptic regularity theory) and \eqref{3555}, it follows that for $L>0$ is sufficiently large,
\begin{gather*}
		\begin{aligned}
			w_L(x)\to u^{TF}(x)\equiv0,\quad\text{for all } \sqrt[d]{(4d)/(3\omega_d)}<|x|<\sqrt[d]{d/(\rho\omega_d)}.
		\end{aligned}
	\end{gather*}
Hence, as $L\to+\infty$,
	\begin{gather}\label{391}
		\begin{aligned}
			|w_L^4(x)-w_L^2(x)|	\leq\frac{1}{16},\quad\text{for all } \sqrt[d]{(4d)/(3\omega_d)}<|x|<\sqrt[d]{d/(\rho\omega_d)}.
		\end{aligned}
	\end{gather}
On the other hand, we deduce from Lemma \ref{lem36} and  $\mu^{TF}=-{3}/{16}$ that $-4/16<\mu_L<-2/16$ for  $L>0$ is sufficiently large. It follow from \eqref{hEL} and \eqref{391}  that there exists a constant $\beta>0$ such that, as $L\to+\infty$,
	\begin{gather*}
		\begin{aligned}
			-\Delta  w_L(x)+\frac{\beta^2L^{2}}{16}w_L(x)
			\leq0,\quad \text{ for all } \sqrt[d]{(4d)/(3\omega_d)}<|x|<\sqrt[d]{d/(\rho\omega_d)}.
		\end{aligned}
	\end{gather*}
	Let $v\in L^{\infty}\Big( \mathbb{R}^d\setminus B(0,\sqrt[d]{(4d)/(3\omega_d)}),\mathbb{R}\Big) $ be such that
	\begin{gather*}
		\begin{cases}
			-\Delta v(x)+\frac{\beta^2L^{2}}{16}v(x)=0\ \ &\text{if}\ x\in\mathbb{R}^d\setminus B(0,\sqrt[d]{(4d)/(3\omega_d)}),\\
			v(x)=w_{L}(x)&\text{if}\ x\in B(0,\sqrt[d]{(4d)/(3\omega_d)}),\\
			\lim_{|x|\to\infty}v(x)=0.
		\end{cases}
	\end{gather*}
By \cite[Lemma 6.4]{JFAvm}, for every $|x|>\sqrt[d]{(4d)/(3\omega_d)}$, there exists a constant $C\in\mathbb{R}^+$ such that
\begin{align*}
    v(x)\leq C|x|^{-\frac{d-1}{2}}e^{-\frac{\beta L}{4}|x|}.
\end{align*}
Hence, by the comparison principle \cite{Okavian}, for every $\sqrt[d]{(4d)/(3\omega_d)}<|x|<\sqrt[d]{d/(\rho\omega_d)}$, we then have
\begin{align}\label{J21}
    w_L(x)\leq v(x)\leq {O}\left(  |x|^{-\frac{d-1}{2}}e^{-\frac{\beta L}{4}|x|}\right) \quad\text{as}\ L\to+\infty.
\end{align}
Thus, applying the local elliptic estimates (cf. (3.15) in \cite{GT97}) to \eqref{hEL}, we get
\begin{align*}
|\nabla w_L(x)|\leq {O}\left( |x|^{-\frac{d-1}{2}}e^{-\frac{\beta L}{8}|x|}\right) \quad\text{holds for sufficiently large } |x|>0\text{ as }L\to+\infty,
\end{align*}
which then implies that there exists a constant $C>0$ such that
\begin{gather}\label{b3}
	\begin{aligned}
		\int_{\partial\mathcal{D}_0}|x||\nabla w_L|^2\,dS\leq C Le^{-\frac{\beta L^{2}}{4}}\quad\text{as }L\to+\infty.
	\end{aligned}
\end{gather}

Finally, combining \eqref{347} and \eqref{325}, we then have
	\begin{gather*}
		\begin{aligned}
		\int_{\mathcal{D}_0}|u^{TF}|^4\,dx=-8e^{TF}(\mathcal{D}_0,1)
		\end{aligned}
	\end{gather*}
	and
\begin{gather*}
	\begin{aligned}
   \int_{\mathcal{D}_0}|w_L|^4dx=-8\left(e_L(\mathcal{D}_0,1)-\frac{L^{-2}}{4d(\rho|\mathcal{D}|)^{\frac{2}{d}}}\int_{\partial\mathcal{D}_0}|x||\nabla   w_L|^2dS-\frac{(d-1)L^{-2}}{2d(\rho|\mathcal{D}|)^{\frac{2}{d}}}\int_{\mathcal{D}_0}|\nabla w_L|^2dx\right) .
	\end{aligned}
\end{gather*}
From Lemma \ref{lem41} and \eqref{b3}, we obtain
\begin{gather*}
    \begin{aligned}
	&\quad\left| \int_{\mathcal{D}_0}|w_L|^4\,dx-\int_{\mathcal{D}_0}|u^{TF}|^4\,dx\right|\\
	&\leq8| e_L(\mathcal{D}_0,1)-e^{TF}(\mathcal{D}_0,1)|+\frac{L^{-2}}{4d(\rho|\mathcal{D}|)^{\frac{2}{d}}}\int_{\partial\mathcal{D}_0}|x||\nabla   w_L|^2dS+\frac{(d-1)L^{-2}}{2d(\rho|\mathcal{D}|)^{\frac{2}{d}}}\int_{\mathcal{D}_0}|\nabla w_L|^2dx\\
	& \leq {O}(L^{-1})\quad\text{as }L\to+\infty.
    \end{aligned}
\end{gather*}
We deduce from \eqref{348} and \eqref{326} that
\begin{gather*}
    \begin{aligned}
	|\mu_L-\mu^{TF}|&\leq\frac{1}{4}\left| \int_{\mathcal{D}_0}|w_L|^4\,dx-\int_{\mathcal{D}_0}|u^{TF}|^4\,dx\right|\\
 &\quad+\frac{3L^{-2}}{2d(\rho|\mathcal{D}|)^{\frac{2}{d}}}\int_{\partial\mathcal{D}_0}|x||\nabla   w_L|^2\,dS+\frac{(3-d)L^{-2}}{d(\rho|\mathcal{D}|)^{\frac{2}{d}}}\int_{\mathcal{D}_0}|\nabla w_L|^2\,dx\\
	& \leq {O}(L^{-1})\quad\text{as }L\to+\infty.
    \end{aligned}
\end{gather*}
Thus, we complete the proof of this lemma.
\end{pf}

\begin{proof}[\textbf{Proof of Theorem \ref{convergence}}:] As  \eqref{112} and \eqref{114} in Theorem \ref{convergence} following from  \eqref{60} and  \eqref{J21}, respectively. Next, we prove \eqref{113} for $\Omega=B(0,\sqrt[d]{4d/(3\omega_d)})$ and $\mathcal{D}_0=B(0,\sqrt[d]{d/(\rho\omega_d)})$. 

Observe that $w_L(x)\geq0$ and $u^{TF}(x)\geq0$ satisfy the following equation: 
\begin{gather*}
    \begin{aligned}
    -\frac{L^{-2}}{(\rho|\mathcal{D}|)^{\frac{2}{d}}}\Delta  w_L(x)-  w_L^3(x)+w_L^5(x)=\mu_L w_L(x)\ \ \text{in }\mathcal{D}_0,
    \end{aligned}
\end{gather*}
and
\begin{gather*}
    \begin{aligned}
    -(u^{TF}(x))^3+(u^{TF}(x))^5=\mu^{TF}u^{TF}(x)\ \ \text{in }\mathcal{D}_0,
\end{aligned}
\end{gather*}
respectively. Hence, for any $x\in\mathcal{D}_0$, we have
\begin{gather*}
    \begin{aligned}
     -\frac{L^{-2}}{(\rho|\mathcal{D}|)^{\frac{2}{d}}}\Delta  w_L- \left( w_L^3-(u^{TF})^{3}\right) +\left( w_L^5-(u^{TF})^{5}\right)
        &=\mu_L w_L-\mu^{TF}u^{TF}\\
        &=\left(\mu_L-\mu^{TF}\right)w_L+\mu^{TF}\left(w_L(x)-u^{TF}\right).
    \end{aligned}
\end{gather*}
Using the binomial theorem, we obtain
\begin{gather*}
    \begin{aligned}
	w_L^3-(u^{TF})^{3}&=\left(\left( w_L-u^{TF}\right) +u^{TF}\right)^{3}-\left(u^{TF}\right)^{3}\\
	&=\left( w_L-u^{TF}\right)^3
	+3\left( w_L-u^{TF}\right)^2u^{TF}
	+3\left( w_L-u^{TF}\right)(u^{TF})^2
    \end{aligned}
\end{gather*}
and
\begin{gather*}
    \begin{aligned}
	w_L^5-(u^{TF})^{5}&=\left(\left( w_L-u^{TF}\right) +u^{TF}\right)^{5}-(u^{TF})^{5}\\
        =&\left( w_L-u^{TF}\right)^5
	+5\left( w_L-u^{TF}\right)^4u^{TF}
	+10\left( w_L-u^{TF}\right)^3(u^{TF})^{2}\\
	&+10\left( w_L-u^{TF}\right)^2(u^{TF})^{3}
	+5\left( w_L-u^{TF}\right)(u^{TF})^{4}.
    \end{aligned}
\end{gather*}
Note that $\mu^{TF}=-3/16$ and $u^{TF}(x)\equiv\sqrt{{3}/{4}}$ in $x\in\Omega=B(0,\sqrt[d]{(4d)/(3\omega_d)})\subset\mathcal{D}_0=B(0,\sqrt[d]{d/(\rho\omega_d)})$. Moreover, for any $x\in\Omega$, we have
	\begin{gather*}
		\begin{aligned}
			\frac{3}{4}\left( w_L-u^{TF}\right) =&-\left( w_L-u^{TF}\right)^5
			-5\left( w_L-u^{TF}\right)^4u^{TF}
			-10\left( w_L-u^{TF}\right)^3(u^{TF})^{2}\\
			&-10\left( w_L-u^{TF}\right)^2(u^{TF})^{3}+\left( w_L-u^{TF}\right)^3
			+3\left( w_L-u^{TF}\right)^2u^{TF}\\
			&+\frac{L^{-2}}{(\rho|\mathcal{D}|)^{\frac{2}{d}}}\Delta  w_L+\left(\mu_L-\mu^{TF}\right)  w_L.
		\end{aligned}
	\end{gather*}
Therefore, there exists a subsequence $\{L_k\}$ such that, for $L_k$ is sufficiently large, from  \eqref{366}, \eqref{339} and Lemma \ref{lem44} that
	\begin{gather*}
		\begin{aligned}
			\left\|w_{L_k}-u^{TF}\right\|_{L^\infty\left( \Omega\right)} &\lesssim L_k^{-\frac{2}{3}}
			+\left\|w_{L_k}-u^{TF}\right\|^2_{{L}^\infty\left( \Omega\right)}.
		\end{aligned}
	\end{gather*}
Thus, as ${L_k}\to+\infty$,
	\begin{gather*}
		\begin{aligned}
			\left\|w_{L_k}-u^{TF}\right\|_{{L}^\infty\left( \Omega\right) }&\lesssim\frac{1-\sqrt{1-4C{L}_k^{-\frac{2}{3}}}}{2}\\
			&=\frac{4C{L}_k^{-\frac{2}{3}}}{2\left(1+\sqrt{1-4C{L}_k^{-\frac{2}{3}}} \right) }\\
			&\lesssim {L}_k^{-\frac{2}{3}}.
		\end{aligned}
	\end{gather*}
This implies that \eqref{113} holds, and we complete the proof of Theorem \ref{convergence}.
\end{proof}

\section{Thomas-Fermi limit}\label{sec4}

In this section, we aim to prove Theorem \ref{th14} for $\mathcal{D}=\mathbb{R}^d$. From Proposition \ref{pro1}, it is known that the minimizer $\varPhi_N$ of $e(\mathbb{R}^d,N)$ (see \eqref{minN}) is a $C^2(\mathbb{R}^d)$ function, non-negative, unique, radially symmetric decreasing, and satisfies the following Euler-Lagrange equation
\begin{gather*}
	\begin{aligned}
	-\Delta \varPhi_N-\varPhi_N^3+\varPhi_N^5=\mu_N \varPhi_N,\quad x\in\mathbb{R}^d,
	\end{aligned}
\end{gather*}
where $\mu_{N}\in\mathbb{R}$ is the corresponding Lagrange multiplier. 
	
\begin{lem}\label{lem21}
Assume that $d=1,2,3$ and $\varPhi_N$ be a minimizer of $e(\mathbb{R}^d,N)$, then, we have
\begin{align}\label{41}
    e(\mathbb{R}^d,N)\sim-N\quad\text{as }  N\to+\infty,
\end{align}
and
\begin{align}\label{4222}
    \int_{\mathbb{R}^d}|\varPhi_N|^4\,dx\sim \int_{\mathbb{R}^d}|\varPhi_N|^6\,dx\sim N\qquad\text{as}\ N\to+\infty.
\end{align}
\end{lem}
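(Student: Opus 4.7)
The plan is to mirror the strategy already used for the bounded-domain case (Lemma \ref{TLer} plus Lemma \ref{lem22}), with scaling tailored to the whole space. Let $\varphi \in H^{1}(\mathbb{R}^{d})$ with $\|\varphi\|_{L^{2}}^{2}=N$. The combination of H\"older and Young's inequalities gives
\begin{gather*}
\int_{\mathbb{R}^{d}}|\varphi|^{4}\,dx
\leq \Bigl(\int_{\mathbb{R}^{d}}|\varphi|^{2}\,dx\Bigr)^{1/2}\Bigl(\int_{\mathbb{R}^{d}}|\varphi|^{6}\,dx\Bigr)^{1/2}
\leq \frac{3}{8}\int_{\mathbb{R}^{d}}|\varphi|^{2}\,dx+\frac{2}{3}\int_{\mathbb{R}^{d}}|\varphi|^{6}\,dx,
\end{gather*}
so $E(\varphi)\geq -3N/32$ and therefore $e(\mathbb{R}^{d},N)\geq -3N/32$.

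For the matching upper bound I plan to adapt the cut-off construction from Lemma \ref{TLer}. Since $u^{TF}=\sqrt{3/4}\,\mathds{1}_{\Omega}$ with $\Omega=B(0,\sqrt[d]{4/(3\omega_{d})})$ is compactly supported, I take a smooth non-negative cut-off $\eta_{N}$ that equals $1$ on $\{x\in\Omega:\mathrm{dist}(x,\partial\Omega)\geq 2f(N)\}$, vanishes off $\Omega$, and satisfies $|\nabla\eta_{N}|\leq M/f(N)$. Set $v_{N}(x):=A_{N}\eta_{N}(x)u^{TF}(x)$ with $A_{N}$ chosen so that $\|v_{N}\|_{L^{2}}=1$; the estimates \eqref{5241}--\eqref{4231} transpose verbatim to give $1\leq A_{N}^{2}\leq 1+O(f(N))$, $\int|\nabla v_{N}|^{2}\leq O(1/f(N))$, and $E^{TF}(v_{N})=E^{TF}(u^{TF})+O(f(N))$. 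The admissible test function for $e(\mathbb{R}^{d},N)$ is then $\varphi_{N}(x):=v_{N}(x/N^{1/d})$, which satisfies $\|\varphi_{N}\|_{L^{2}}^{2}=N$. The scaling identities
\begin{gather*}
\int_{\mathbb{R}^{d}}|\nabla\varphi_{N}|^{2}\,dx=N^{1-2/d}\int_{\mathbb{R}^{d}}|\nabla v_{N}|^{2}\,dx,
\qquad \int_{\mathbb{R}^{d}}|\varphi_{N}|^{p}\,dx=N\int_{\mathbb{R}^{d}}|v_{N}|^{p}\,dx
\end{gather*}
yield
\begin{gather*}
\frac{E(\varphi_{N})}{N}\leq E^{TF}(u^{TF})+O(f(N))+O\!\left(\frac{N^{-2/d}}{f(N)}\right).
\end{gather*}
Optimizing with $f(N)=N^{-1/d}$ and recalling $E^{TF}(u^{TF})=-3/32$ from Lemma \ref{A} produces
$e(\mathbb{R}^{d},N)\leq -3N/32+O(N^{1-1/d})$, which combined with the lower bound establishes \eqref{41}.

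With \eqref{41} in hand, \eqref{4222} follows by the book-keeping of Lemma \ref{lem22}. Since $\tfrac{1}{2}\int|\nabla\varPhi_{N}|^{2}\geq 0$ and $E(\varPhi_{N})=e(\mathbb{R}^{d},N)\lesssim -N$, one has $\tfrac{1}{6}\int|\varPhi_{N}|^{6}\leq \tfrac{1}{4}\int|\varPhi_{N}|^{4}$ together with $\int|\varPhi_{N}|^{4}\gtrsim N$. H\"older's inequality $\int|\varPhi_{N}|^{4}\leq N^{1/2}(\int|\varPhi_{N}|^{6})^{1/2}$ then gives $\int|\varPhi_{N}|^{6}\leq \tfrac{9}{4}N$ and $\int|\varPhi_{N}|^{4}\leq \tfrac{3}{2}N$, while the same H\"older bound reversed gives $\int|\varPhi_{N}|^{6}\gtrsim N$.

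The principal subtlety is not analytic but logistical: in \S3 the cut-off $\eta_{L}$ was designed so that $u^{TF}$ would fit inside the shrinking domain $\mathcal{D}_{0}$; here the ambient space is $\mathbb{R}^{d}$ and $u^{TF}$ is already compactly supported, so the cut-off is only needed to smooth out the boundary jump of $\mathds{1}_{\Omega}$ itself. I expect the main bookkeeping obstacle to be matching the two error terms $Nf(N)$ and $N^{1-2/d}/f(N)$ coming from the cutoff width and the dimensional scaling of the Dirichlet energy, which is handled by the single choice $f(N)=N^{-1/d}$; no further ingredients beyond those already available in Lemma \ref{A} and the arguments of Lemmas \ref{TLer} and \ref{lem22} are required.
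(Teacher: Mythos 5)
Your proposal is correct: the lower bound $e(\mathbb{R}^d,N)\geq -3N/32$ and the bookkeeping for \eqref{4222} are identical to the paper's, but your upper bound takes a genuinely different route. The paper tests the energy with the scaled Weinstein soliton $Q_{\eta}(x)=N^{1/2}\eta^{d/2}\|Q_d\|_2^{-1}Q_d(\eta x)$, $\eta=C_0N^{-1/d}$ with $C_0$ small, and uses the identities \eqref{Qid} to read off $e(\mathbb{R}^d,N)\leq E(Q_{\eta})\lesssim -N$; this is quicker but only delivers the order of magnitude, which is all that \eqref{41} asserts. You instead transplant the cut-off Thomas--Fermi trial function of Lemma \ref{TLer} to $\mathbb{R}^d$ (the cut-off now serving only to mollify the jump of $\mathds{1}_{\Omega}$, since $u^{TF}\notin H^1$), rescale by $N^{1/d}$, and balance the two error terms with $f(N)=N^{-1/d}$; the scaling identities you record are correct, and the outcome is the sharper bound $e(\mathbb{R}^d,N)\leq -\tfrac{3}{32}N+O(N^{1-1/d})$. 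Combined with the lower bound this already gives the expansion \eqref{116}, which the paper only establishes later (Proposition \ref{p42}, item 1) by citing the argument of Lemma \ref{lem41}. So your version costs slightly more bookkeeping than the paper's $Q_{\eta}$ computation but proves strictly more than Lemma \ref{lem21} requires and effectively pre-empts part of the proof of Theorem \ref{th14}. Both arguments are sound.
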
	
\begin{pf} Lower bound estimate of \eqref{41}. For any $v\in H^1(\mathbb{R}^d)$ with $\|v\|_{L^2(\mathbb{R}^d)}^2=N$, using the H\"older's inequality and Young's inequality, we have
\begin{gather*}
    \begin{aligned}
    {E}(v)
    \geq-\frac{1}{4}\int_{\mathbb{R}^d}|v|^4\,dx+\frac{1}{6}\int_{\mathbb{R}^d}|v|^6\,dx\geq-\frac{3N}{32}.
    \end{aligned}
\end{gather*}
		
Next, we will give the upper bound estimate of the energy $e(\mathbb{R}^d,N)$ as $ N\to+\infty$. Let $Q_d$ be the unique positive radial ground state solution of \eqref{Qeq}. For any $\eta>0$, choose a test function
		\begin{gather*}
			\begin{aligned}
				Q_{\eta}(x)=\frac{N^{\frac{1}{2}}\eta^{\frac{d}{2}}}{\|Q_d\|_{2}} Q_d(\eta x),\quad x\in\mathbb{R}^d,
			\end{aligned}
		\end{gather*}
such that $\|Q_{\eta}\|_{2}^2=N$. By \eqref{Qid}, we get
\begin{gather}\label{291}
\begin{aligned}
E(Q_{\eta})&=\frac{1}{2}\int_{\mathbb{R}^d}|\nabla Q_{\eta}|^2\,dx-\frac{1}{4}\int_{\mathbb{R}^d}|Q_{\eta}|^4\,dx+\frac{1}{6}\int_{\mathbb{R}^d}|Q_{\eta}|^6\,dx\\
&=\frac{N\eta^2}{2}-\frac{N^2\eta^{d}}{4\|Q_d\|_{2}^2}+\frac{N^3\eta^{2d}}{6\|Q_d\|_{2}^6}\int_{\mathbb{R}^d}|Q_{d}|^6\,dx.
\end{aligned}
\end{gather}	
Let $\eta=C_0N^{-1/d}$ and $C_0>0$ is sufficiently small, such that
\begin{gather}\label{210}
	\begin{aligned}
		\frac{C_0^{d}}{4\|Q_d\|_{2}^2}-\frac{C_0^{2d}}{6\|Q_d\|_{2}^6}\int_{\mathbb{R}^d}|Q_{d}|^6\,dx>0.
	\end{aligned}
\end{gather}
Hence, we deduce from \eqref{291} and \eqref{210}  that
		\begin{gather*}
			\begin{aligned}
				e(N)\leq{E}(Q_{\eta})\lesssim N(N^{-2/d}-1)\lesssim-N\qquad\text{as}\ N\to+\infty.
			\end{aligned}
		\end{gather*}	
Thus, we completes the proof of \eqref{41}.

Finally, to prove \eqref{4222}. In fact, by the similar argument as in   Lemma \ref{lem22}, one can also obtain  \eqref{4222}. Here, we omit it.
\end{pf}

Inspired by the proof of Thermodynamic limit, we define 
\begin{align*}
    w_N(x)=\varPhi_N({N^{1/d}}{x}),\quad x\in\mathbb{R}^d.
\end{align*}
Note that $\|w_N\|_{2}^2=1$. We deduce from Lemma \ref{lem21} that
\begin{gather*}
    \begin{aligned}
    \int_{\mathbb{R}^d}|w_N|^4\,dx\sim \int_{\mathbb{R}^d}|w_N|^6\,dx\sim1\qquad\text{as}\ N\to+\infty.
    \end{aligned}
\end{gather*}
Now, we introduce the following constraint minimization problem
\begin{gather*}
	\begin{aligned}
	e_N(\mathbb{R}^d,1):=\inf\left\lbrace E_N(w): w\in H^1(\mathbb{R}^d), \int_{\mathbb{R}^d}|w|^2\,dx=1\right\rbrace,
	\end{aligned}
\end{gather*}
where the energy functional ${E}_{N}$ be given by
\begin{gather*}
    \begin{aligned}
	{E}_{N}(w)=\frac{N^{-2/d}}{2}\int_{ \mathbb{R}^d}|\nabla w|^2\,dx-\frac{1}{4}\int_{\mathbb{R}^d}|w|^{4}\,dx+\frac{1}{6}\int_{ \mathbb{R}^d}|w|^6\,dx.
    \end{aligned}
\end{gather*}
 Thus, $w_N$ is also non-negative radially symmetric ground state of $e_N(\mathbb{R}^d,1)$, i.e.,
\begin{gather*}
    \begin{aligned}
	N^{-1}E(\varPhi_N)={E}_{N}(w_N)=e_N(\mathbb{R}^d,1).
    \end{aligned}
\end{gather*}
And $w_N$ satisfies the following Euler-Lagrange equation
\begin{gather}\label{l3}
    \begin{aligned}
    -N^{-2/d}\Delta  w_N(x)-  w_N^3(x)+w_N^5(x)
    =\mu_N w_N(x),\quad x\in\mathbb{R}^d,
    \end{aligned}
\end{gather}
where $\mu_N \in\mathbb{R}$ is a Lagrange multiplier associated to $w_N(x)$.

In order to obtain the asymptotic behavior of the Lagrange multiplier $\mu_N$,  we need to give the following Pohozaev-type identity  $\partial_\eta E_{N}(\eta^{\frac{d}{2}} w_N(\eta x))|_{\eta=1}=0$, i.e.,
\begin{align*}
	N^{-2/d}\int_{\mathbb{R}^d}|\nabla w_N|^2\,dx
	+\frac{d}{3}\int_{\mathbb{R}^d}|w_N|^6\,dx=\frac{d}{4}\int_{\mathbb{R}^d}|w_N|^4\,dx.
\end{align*}
Combining \eqref{hEL}, it is known that the Lagrange multiplier $\mu_N$ can be computed as
\begin{gather}\label{hp1}
	\begin{aligned}
		\mu_N=\frac{(d-3)N^{-2/d}}{d}\int_{\mathbb{R}^d}|\nabla w_N|^2\,dx-\frac{1}{4}\int_{\mathbb{R}^d}|w_N|^4\,dx
	\end{aligned}
\end{gather}
and 
\begin{gather}\label{hp2}
\begin{aligned}
e_N(\mathbb{R}^d,1)=\frac{(d-4)N^{-2/d}}{2d}\int_{\mathbb{R}^d}|\nabla w_N|^2\,dx-\frac{1}{8}\int_{\mathbb{R}^d}|w_N|^4\,dx.
\end{aligned}
\end{gather}

\begin{pro}\label{p42}
Suppose that $d=1,2,3$ and $w_N$ is a minimizer of $e_N(\mathbb{R}^d,1)$. Then
\begin{itemize}
    \item[\rm(I)] (Total energy) $0\leq e_N(\mathbb{R}^d,1)-e^{TF}(\mathbb{R}^d,1)\leq {O}(N^{-1/d})$ as $ N\to+\infty$.
    \item[\rm(II)] (Kinetic energy) $\int_{\mathbb{R}^d}|\nabla w_N|^2\,dx\leq O(N^{1/d})$ as $ N\to+\infty$.
     \item[\rm(III)] (Lagrange multiplier) $|\mu_N-\mu^{TF}|\leq O(N^{-1/d})$ as $ N\to+\infty$.
      \item[\rm(IV)] (Interior Laplace estimate) $\|\Delta w_N(x)\|_{L^\infty(B(0,\sqrt[d]{(4d)/(3\omega_d)}))}\leq O\big(N^{\frac{4}{3d}}\big)$ as $ N\to+\infty$.
\end{itemize}
\end{pro}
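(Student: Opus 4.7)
The plan is to follow the three-step template of Lemmas \ref{lem41}, \ref{lem36}, and \ref{lem44} verbatim with $L$ replaced by $N^{1/d}$, exploiting two simplifications unavailable in the bounded-domain case: on $\mathbb{R}^d$ the Pohozaev identity for $w_N$ carries no boundary contribution, and the Thomas-Fermi minimizer $u^{TF}=\sqrt{3/4}\,\mathds{1}_{B(0,r_0)}$, with $r_0=\sqrt[d]{4/(3\omega_d)}$, is already compactly supported, so the elaborate cut-off engineering of Lemma \ref{TLer} can be replaced by a plain mollification.

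For parts 1 and 2, the lower bound $\mathcal{E}(N)\ge\mathcal{E}^{TF}(\mathbb{R}^d)=-3/32$ is immediate from dropping the kinetic term. For the upper bound I would set $u^{TF}_N:=A_N(u^{TF}\ast\phi_{\varepsilon_N})$ with a standard mollifier $\phi_\varepsilon$, $\varepsilon_N=N^{-1/d}$, and $A_N$ rescaling to unit $L^2$ norm; since $u^{TF}$ is a bounded characteristic-type function, $A_N=1+O(N^{-1/d})$, $E^{TF}(u^{TF}_N)=E^{TF}(u^{TF})+O(N^{-1/d})$, and $\|\nabla u^{TF}_N\|_2^2=O(\varepsilon_N^{-1})=O(N^{1/d})$, yielding $\mathcal{E}(N)\le\mathcal{E}^{TF}(\mathbb{R}^d)+O(N^{-1/d})$. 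Part 2 then drops out of
\[
\tfrac{N^{-2/d}}{2}\int_{\mathbb{R}^d}|\nabla w_N|^2\,dx=\mathcal{E}(N)-E^{TF}(w_N)\le\mathcal{E}(N)-\mathcal{E}^{TF}(\mathbb{R}^d)\le O(N^{-1/d}),
\]
where I use that $w_N$ is admissible for the TF problem.

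For part 3 I would reuse the scheme of Lemma \ref{lem36} but skip its Step 2 (the exponential-tail control of $\int_{\partial\mathcal{D}_0}|x||\nabla w_L|^2\,dS$) since no boundary integral appears. The whole-space Pohozaev identity combined with the Euler-Lagrange equation tested against $w_N$ gives the $\mu_N$ formula displayed just above the proposition, and subtracting $\mu^{TF}=-\tfrac13\int|u^{TF}|^6$ produces
\[
\mu_N-\mu^{TF}=\tfrac{(d-4)N^{-2/d}}{d}\int_{\mathbb{R}^d}|\nabla w_N|^2\,dx-\tfrac13\Bigl(\int_{\mathbb{R}^d}|w_N|^6\,dx-\int_{\mathbb{R}^d}|u^{TF}|^6\,dx\Bigr).
\]
The first term is $O(N^{-1/d})$ by part 2. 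For the second, the energy identity $\int|w_N|^6\,dx=-6\mathcal{E}(N)+\tfrac{6(d-2)N^{-2/d}}{d}\int|\nabla w_N|^2\,dx$ and its TF analog $\int|u^{TF}|^6\,dx=-6\mathcal{E}^{TF}(\mathbb{R}^d)$, combined with parts 1 and 2, again give $O(N^{-1/d})$. Unlike in the bounded case, no comparison-principle or exponential-tail argument is needed.

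For part 4 I would bootstrap Proposition \ref{nablees} exactly as in Case 1 of Lemma \ref{lem44}, with the further convenience that $w_N$ lives on all of $\mathbb{R}^d$, so any fixed $\delta>0$ works in the nested balls $B(0,r_0+2\delta)\supset B(0,r_0+\delta)\supset B(0,r_0)$. One first establishes $\|w_N\|_\infty\sim 1$ from the maximum-point identity $w_N^5(0)\le w_N^3(0)$ at the origin together with \eqref{4222}, then writes the Euler-Lagrange equation as $-\Delta w_N=f$ with $\|f\|_\infty\lesssim N^{2/d}$ (using part 3 to control $\mu_N$), applies Proposition \ref{nablees} to get $\|\nabla w_N\|_{L^\infty(B(0,r_0+\delta))}\lesssim N^{1/d}$, and applies it once more to the differentiated equation to obtain the claimed interior Laplace bound. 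The only real obstacle is keeping the powers of $N^{1/d}$ straight across iterations, but no new analytic input beyond Lemma \ref{lem44} is required.
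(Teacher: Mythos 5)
Your treatment of parts 1--3 is sound and is essentially the route the paper intends (its proof of Proposition \ref{p42} consists of the remark that one argues as in Lemmas \ref{lem41}, \ref{lem36} and \ref{lem44}). The two simplifications you isolate are real: on $\mathbb{R}^d$ the Pohozaev identity carries no boundary term, so Step~2 of Lemma \ref{lem36} can indeed be skipped, and your mollification of $\sqrt{3/4}\,\mathds{1}_{B(0,r_0)}$ at scale $\varepsilon_N=N^{-1/d}$ reproduces exactly the $O(f)+O(N^{-2/d}/f)$ trade-off of the cut-off construction in Lemma \ref{TLer}. Parts 1--3 close as you describe.

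Part 4, however, has a genuine gap, and it sits precisely in the step you defer (``keeping the powers of $N^{1/d}$ straight''). Write $r_0=\sqrt[d]{4/(3\omega_d)}$ and carry out the second application of Proposition \ref{nablees} faithfully: with $u=\partial_i w_N$ you have $\|u\|_{L^\infty(B(0,r_0+\delta))}\lesssim N^{1/d}$ from the first application and $\|f\|_\infty\lesssim N^{2/d}\cdot N^{1/d}=N^{3/d}$ for the differentiated equation, so the proposition gives $|\nabla \partial_i w_N|^2\lesssim \|f\|_\infty\|u\|_\infty+\delta^{-2}\|u\|_\infty^2\lesssim N^{4/d}$, hence $\|\Delta w_N\|_{L^\infty(B(0,r_0))}\lesssim N^{2/d}$, not $N^{3/(2d)}$. (The paper's own Lemma \ref{lem44} writes $|\nabla(\nabla w_L)|^2\le C\bigl(L^3+\mathrm{dist}^{-2}\bigr)$ at the corresponding step, i.e.\ it omits the factors $\|u\|_\infty\lesssim L$ and $\|u\|_\infty^2\lesssim L^2$ demanded by Proposition \ref{nablees}; that omission is the source of the exponent $3/2$, so reproducing the lemma ``verbatim'' reproduces the slip rather than the bound.) Note also that the exponent in the statement, $O(N^{-3/(2d)})$, is negative and cannot be the intended claim; the analogy $L=N^{1/d}$ with Lemma \ref{lem44} and the downstream use in \eqref{10} both require $O(N^{+3/(2d)})$. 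Your argument as written delivers only $O(N^{2/d})$, which is too weak to feed into the fixed-point argument for \eqref{10} (there one needs $N^{-2/d}\|\Delta w_N\|_\infty\lesssim N^{-1/(2d)}$). Closing this requires a genuinely new input --- for instance exploiting that $\Delta w_N=-N^{2/d}\bigl(w_N^3-w_N^5+\mu_N w_N\bigr)$ and that the bracket is small wherever $w_N$ is already close to $\sqrt{3/4}$ --- and not merely bookkeeping.
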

\begin{pf}
Similar to the proof of Lemma \ref{lem41} and \ref{lem44}, from the above equations \eqref{l3}--\eqref{hp2}, we can obtain (I), (II) and (IV). Next, to prove (III) holds.

Combining \eqref{347} and \eqref{hp2}, we then have
\begin{gather*}
	\begin{aligned}
	\int_{\mathbb{R}^d}|u^{TF}|^4\,dx=-8	e^{TF}(\mathbb{R}^d,1)
	\end{aligned}
\end{gather*}
and
\begin{gather*}
	\begin{aligned}
	\int_{\mathbb{R}^d}|w_N|^4\,dx=-8\left(  e_N(\mathbb{R}^d,1)+\frac{(d-4)N^{-2/d}}{2d}\int_{\mathbb{R}^d}|\nabla w_N|^2\,dx\right) .
	\end{aligned}
\end{gather*}
Thus, by (I) and (II), we obtain
\begin{gather}\label{424}
	\begin{aligned}
	\left| \int_{\mathbb{R}^d}|w_N|^4\,dx-\int_{\mathbb{R}^d}|u^{TF}|^4\,dx\right|
	&\leq8| e_N(\mathbb{R}^d,1)-e^{TF}(\mathbb{R}^d,1)|+12N^{-2/d}\int_{\mathbb{R}^d}|\nabla w_N|^2\,dx\\
	& \leq {O}(N^{-1/d})\quad\text{as }N\to+\infty.
	\end{aligned}
\end{gather}
We deduce from \eqref{348}, \eqref{hp1} and  \eqref{424} that
\begin{gather*}
	\begin{aligned}
	|\mu_N-\mu^{TF}|&=\left|\frac{1}{8}\int_{\mathbb{R}^d}|u^{TF}|^6\,dx-\frac{1}{8}\int_{\mathbb{R}^d}|w_N|^6\,dx+\frac{(d-3)N^{-2/d}}{d}\int_{\mathbb{R}^d}|\nabla w_N|^2\,dx\right| \\
	&\leq\frac{1}{8}\left| \int_{\mathbb{R}^d}|w_N|^6\,dx-\int_{\mathbb{R}^d}|u^{TF}|^6\,dx\right| +N^{-2/d}\int_{\mathbb{R}^d}|\nabla w_N|^2\,dx\\
	& \leq {O}(N^{-1/d})\quad\text{as }N\to+\infty.
	\end{aligned}
\end{gather*}
This implies that (III) holds, and we complete the proof of Proposition \ref{p42}.
\end{pf}

\begin{proof}[\textbf{Proof of Theorem \ref{th14}}:]
From \eqref{45} and Proposition \ref{p42}.(I), we get \eqref{116} holds. Similar to the proof of \eqref{13} and Theorem \ref{convergence}, we can obtain \eqref{115}, \eqref{10} and \eqref{11}. Here, we omit it.
\end{proof}

\appendix
\section{Appendix}
\subsection{The  Thomas-Fermi Problem}

In this  appendix, we present a simple argument of Thomas-Fermi minimizer $u^{TF}$, the limit profiles of $w_L$ above. Let $A\subseteq\mathbb{R}^d$ be a domain, which can be a bounded region, an unbounded region, or the whole space $\mathbb{R}^d$. Next, we consider the following  Thomas-Fermi minimization problem
\begin{gather}\label{TF1}
    \begin{aligned}
	e^{TF}(A,1)=\inf\left\lbrace {E}^{TF}(u):  u\in L^6(A)\text{ and } \int_{A}|u|^2\,dx=1\right\rbrace,
    \end{aligned}
\end{gather}
where the Thomas-Fermi energy functional $E^{TF}$ is given by
\begin{gather*}
    \begin{aligned}
	E^{TF}(u)=-\frac{1}{4}\int_{A}|u|^{4}\,dx+\frac{1}{6}\int_{A}|u|^6\,dx.
    \end{aligned}
\end{gather*}


\begin{lem}\label{A}[Thomas-Fermi has constant optimizers] Assume that $A\subseteq\mathbb{R}^d$ is a domain with $|A|\geq1$. Then there exists at least one minimizer of \eqref{TF1}, and 
$$e^{TF}(A,1)=-\left(\frac{1}{2}-\frac{1}{3m}\right)\frac{1}{2m}$$
with equality if and only if 
$$u^{TF}(x)=\sqrt{\frac{\mathds{1}_{\Omega}}{m}}(x)\quad\text{in }A$$
for some Borel set $\Omega\subseteq A$ and measure $|\Omega|=m =\min\{|A|, 4/3\}$. 
\end{lem}

\begin{re}  
Note that, if $0<\rho\leq1$, ground states $u^{TF}$ of the Thomas-Fermi minimization problem \eqref{minTF} are not unique at all, even up to translations, since for any set $\Omega\subset\mathcal{D}_0$ only the measure $|\Omega| =\min\{|A|, 4/3\}$ is required.
\end{re}
\begin{re}
For $0<|A|<1$, things  become more  complicated,  we have no idea.
\end{re}

\begin{proof}[\textbf{Proof of Lemma \ref{A}}] For $|A|\geq 4/3$, by the same argument as in \cite{ARMAGontier}, one can obtain that the Thomas-Fermi energy $e^{TF}(A,1)$ has a ground state. However, for $1\leq|A|<4/3$, we will give a new method to prove this. 

Now, we consider the following equivalence problem (i.e., let $\varphi(x)=|u(x)|^2$)
\begin{gather*}
    \begin{aligned}
		\mathop{\inf}_{\varphi(x)\geq0
		\atop
		\int_{  A}\varphi(x)\,dx=1
		}\int_{ A}\left(\frac{1}{6}\varphi^3(x)-\frac{1}{4}\varphi^2(x)\right)\,dx.
    \end{aligned}
\end{gather*}

\textbf{Case 1:} $|A|\geq4/3$.
For any $\varphi\geq0$ and $\int_{ A}\varphi\,dx=1$, using the H\"older's inequality and Young's inequality, we obtain
\begin{gather*}
    \begin{aligned}
    \int_{ A}\varphi^2(x)\,dx&\leq\left(\int_{ A}\varphi(x)\,dx\right)^{\frac{1}{2}}\left(\int_{ A}\varphi^3(x)\,dx\right)^{\frac{1}{2}}
    \leq\frac{3}{8}+\frac{2}{3}\int_{ A}\varphi^3(x)\,dx.
    \end{aligned}
\end{gather*}
Hence, we get
\begin{gather*}
    \begin{aligned}
    \mathop{\inf}_{\varphi(x)\geq0
		\atop\int_{  A}\varphi(x)\,dx=1
		}\int_{ A}\left(\frac{1}{6}\varphi^3(x)-\frac{1}{4}\varphi^2(x)\right)\,dx\geq-\frac{3}{32}.
    \end{aligned}
\end{gather*}
Let $\alpha_*:=\frac{1}{6}\rho_*^2-\frac{1}{4}\rho_*$, where
\begin{gather*}
    \begin{aligned}
        \rho_*=\mathop{\rm{argmin}}_{\varphi(x)>0}\left( \frac{1}{6}\varphi^2(x)-\frac{1}{4}\varphi(x)\right)=\frac{3}{4}.
    \end{aligned}
\end{gather*}
Defined a map
\begin{gather*}
    \begin{aligned}
		\varphi\in\mathbb{R}^+\mapsto \frac{1}{6}\varphi^3(x)-\frac{1}{4}\varphi^2(x)-\alpha_*\varphi(x)
    \end{aligned}
\end{gather*}
such that this map is non-negative over $\mathbb{R}^+$. Thus we have
\begin{gather*}
    \begin{aligned}
		&\int_{ A}\left(\frac{1}{6}\varphi^3(x)-\frac{1}{4}\varphi^2(x)\right)\,dx-\alpha_*\\
  &=\int_{ A}\left(\frac{1}{6}\varphi^3(x)-\frac{1}{4}\varphi^2(x)-\alpha_*\varphi(x)\right)\,dx\geq0
    \end{aligned}
\end{gather*}
with equality if and only if $\varphi$ takes only the two values $0$ and $\rho_*$. Since $| A|\geq4/3$, hence is of the form $\varphi(x)=\rho_*\mathds{1}_{\Omega}(x)$, where $\Omega\subseteq A$ is a Borel set with $|\Omega|=1/\rho_*$. The minimum in the statement is thus equal to $\alpha_*$ and its value follows after a computation.

\textbf{Case 2:} $4/3>|A|\geq1$.
For any $\varphi\geq0$ and $\int_{ A}\varphi\,dx=1$, using the H\"older's inequality, we obtain
\begin{gather*}
    \begin{aligned}
    \int_{A}\varphi^2(x)\,dx&=\int_{ A}1\cdot\varphi^\alpha(x)\cdot\varphi^\beta(x)\,dx\\
    &\leq\left(\int_{A}\,dx\right)^{\frac{1}{p_1}}\left(\int_{ A}\varphi^{\alpha p_2}(x)\,dx\right)^{\frac{1}{p_2}}\left(\int_{ A}\varphi^{\beta p_3}(x)\,dx\right)^{\frac{1}{p_3}},
    \end{aligned}
\end{gather*}
where the constants $\alpha,\beta\geq0$ and $p_1,p_2,p_3>1$ satisfy
\begin{gather*}
    \begin{cases}
        \frac{1}{p_1}+\frac{1}{p_2}+\frac{1}{p_3}=1,\\
        \qquad\quad\ \ \alpha p_2=1,\\
        \qquad\quad\ \ \beta p_3=3,\\
        \qquad\ \ \alpha +\beta=2.
    \end{cases}
\end{gather*}
Then,  we have
\begin{gather*}
    \begin{cases}
        \frac{1}{p_1}=1-\alpha-\frac{\beta}{3}=\frac{2\beta-3}{3}<1, (\text{since } p_1>1)\\
        \  \beta=2-\alpha\geq0.
    \end{cases}
\end{gather*}
Thus,  $\beta$ must be
$$3/2<\beta\leq2,$$
and
\begin{gather*}
    \begin{aligned}
    \int_{ A}\varphi^2(x)\,dx\leq\left(\int_{A}\,dx\right)^{\frac{2\beta-3}{3}}\left(\int_{ A}\varphi(x)\,dx\right)^{\frac{1}{p_2}}\left(\int_{ A}\varphi^3(x)\,dx\right)^{\frac{\beta}{3}}.
    \end{aligned}
\end{gather*}
From $3/2<\beta\leq2$, $\int_{ A}\varphi\,dx=1$ and Young's inequality, we get
\begin{gather*}
    \begin{aligned}
    \int_{ A}\varphi^2(x)\,dx
    &\leq\frac{3-\beta}{3}\left(\frac{\beta}{2}\right)^{\frac{3}{3-\beta}}|A|^{\frac{2\beta-3}{3-\beta}}+\frac{2}{3}\int_{ A}\varphi^3(x)\,dx.
    \end{aligned}
\end{gather*}
Hence,
\begin{gather}\label{A3}
    \begin{aligned}
    \mathop{\inf}_{\varphi(x)\geq0
		\atop\int_{  A}\varphi(x)\,dx=1
		}\int_{ A}\left(\frac{1}{6}\varphi^3(x)-\frac{1}{4}\varphi^2(x)\right)\,dx\geq-\frac{3-\beta}{3}\left(\frac{\beta}{2}\right)^{\frac{3}{3-\beta}}|A|^{\frac{2\beta-3}{3-\beta}}.
    \end{aligned}
\end{gather}
On the other hand, taking a test function $\varphi_0=|A|^{-1}$ in $A$, we have
\begin{gather}\label{A4}
    \begin{aligned}
	\int_{A}\left(\frac{1}{6}\varphi_0^3(x)-\frac{1}{4}\varphi_0^2(x)\right)\,dx=\left(\frac{1}{6|A|}-\frac{1}{4}\right)\frac{1}{|A|}.
    \end{aligned}
\end{gather}
Taking $\beta=2/|A|$, we get
\begin{gather}\label{A10}
    \begin{aligned}
    -\frac{3-\beta}{3}\left(\frac{\beta}{2}\right)^{\frac{3}{3-\beta}}|A|^{\frac{2\beta-3}{3-\beta}}&=-\frac{3-2/|A|}{3}\left(\frac{2/|A|}{2}\right)^{\frac{3}{3-2/|A|}}|A|^{\frac{4/|A|-3}{3-2/|A|}}\\
    &=\left(\frac{1}{6|A|}-\frac{1}{4}\right)\frac{1}{|A|}.
    \end{aligned}
\end{gather}
Since $3/2<\beta\leq2$, then \eqref{A10} holds if and only if $1\leq|A|<4/3$. Thus, we deduce from \eqref{A3}--\eqref{A10} that $\varphi_0=|A|^{-1}$ is a minimizer of $e^{TF}(A,1)$ for $4/3>|A|\geq1$.

Let $u^{TF}$ be a minimizer of $e^{TF}(A,1)$ for $4/3>|A|\geq1$. And $u^{TF}$ satisfies the following Euler-Lagrange equation
\begin{gather*}
	\begin{aligned}
	-(u^{TF})^3+(u^{TF})^5=\mu^{TF} u^{TF},\quad x\in A,
	\end{aligned}
\end{gather*}
where $\mu^{TF}\in\mathbb{R}$ is a Lagrange multiplier associated to $u^{TF}$. According to the Euler-Lagrange equation and the normalization condition, the following two cases hold:

(I) $u^{TF}\equiv|A|^{-1/2}$.

(II) $u^{TF}=((1-\gamma)|A|)^{-1/2}$ in $A\setminus B$, where $B$ is a  support set  of $u^{TF}$ and $|B|=\gamma|A|$ for $0\leq\gamma<1$.  

From the proof, it can be seen that (I) holds. Next, let's prove (II).
In fact, 
\begin{gather*}
    \begin{aligned}
	E^{TF}(u^{TF})&=-\frac{1}{4}\int_{A}|u^{TF}|^{4}\,dx+\frac{1}{6}\int_{A}|u^{TF}|^6\,dx\\
    &=\frac{1}{6}\frac{1}{(1-\gamma)^2|A|^2}-\frac{1}{4}\frac{1}{(1-\gamma)|A|}.
    \end{aligned}
\end{gather*}
Thus, combining \eqref{A4}, we have $\gamma=0$ and $|B|=0$.

\end{proof}

\subsection{The Pohozaev identity for bounded domain}
In order to prove the Pohozaev identity, we multiply \eqref{hEL} by $x \cdot \nabla w_L$ and we integrate by parts.
\begin{lem}\label{BP}
Assume that $d=1,2,3$ and $\mathcal{D}\subset\mathbb{R}^d$ be a  bounded domain. Let $w_L$ be a non-negative radially symmetric ground state of \eqref{min1} satisfy equation \eqref{hEL}, then we have
\begin{gather*}
    \begin{aligned}
    \frac{L^{-2}}{2(\rho|\mathcal{D}|)^{\frac{2}{d}}}\int_{\partial\mathcal{D}_0}|\nabla   w_L|^2(x\cdot\nu)&\,dS+\frac{(d-2)L^{-2}}{2(\rho|\mathcal{D}|)^{\frac{2}{d}}}\int_{\mathcal{D}_0}|\nabla w_L|^2\,dx\\
    &=\frac{d}{4}\int_{\mathcal{D}_0}|w_L|^4\,dx-\frac{d}{6}\int_{\mathcal{D}_0}|w_L|^6\,dx+\frac{d\mu_L}{2}\int_{\mathcal{D}_0}|w_L|^2\,dx,
    \end{aligned}
\end{gather*}
where $\mathcal{D}_0=\big\{x\in\mathbb{R}^d:\sqrt[d]{\rho |\mathcal{D}|}x\in \mathcal{D}\big\}$ and $\nu=\nu(x)$ denotes the outward unit normal vector to $\partial\mathcal{D}_0$. 
\end{lem}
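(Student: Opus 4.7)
The plan is to prove the identity by the classical Pohozaev multiplier technique: multiply the Euler--Lagrange equation \eqref{hEL} by $x\cdot\nabla w_L$ and integrate over $\mathcal{D}_0$. Since $\mathcal{D}_0$ is a smooth bounded ball and $w_L\in H^1_0(\mathcal{D}_0)$ is a minimizer, standard elliptic regularity (applied to the subcritical/critical nonlinearity, bounded thanks to the available $L^\infty$ bound established later in \eqref{366}) guarantees that $w_L$ is of class $C^2$ up to the boundary, so all the integration by parts steps below are justified.

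The computation splits into three pieces. For the Laplacian term, the key identity is
\begin{equation*}
\int_{\mathcal{D}_0}(\Delta w_L)(x\cdot\nabla w_L)\,dx=\frac{d-2}{2}\int_{\mathcal{D}_0}|\nabla w_L|^2\,dx+\frac{1}{2}\int_{\partial\mathcal{D}_0}|\nabla w_L|^2(x\cdot\nu)\,dS,
\end{equation*}
which is obtained by writing $\Delta w_L(x\cdot\nabla w_L)=\sum_{i,j}\partial_i^2 w_L\,x_j\partial_j w_L$, integrating by parts in $\partial_i$, and using $\partial_i w_L\,\partial_i\partial_j w_L=\tfrac12\partial_j|\nabla w_L|^2$ together with a further integration by parts. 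The crucial simplification comes from the Dirichlet boundary condition $w_L\big|_{\partial\mathcal{D}_0}=0$: the tangential gradient vanishes on $\partial\mathcal{D}_0$, so $\nabla w_L=(\partial_\nu w_L)\nu$ there, and the two surface contributions combine into $\tfrac12\int_{\partial\mathcal{D}_0}|\nabla w_L|^2(x\cdot\nu)\,dS$. For each polynomial term, I would use
\begin{equation*}
\int_{\mathcal{D}_0}w_L^{k-1}(x\cdot\nabla w_L)\,dx=\frac{1}{k}\int_{\mathcal{D}_0}x\cdot\nabla(w_L^k)\,dx=-\frac{d}{k}\int_{\mathcal{D}_0}w_L^k\,dx,
\end{equation*}
where the boundary term vanishes again because $w_L=0$ on $\partial\mathcal{D}_0$; this handles the $w_L^3$, $w_L^5$, and $\mu_L w_L$ contributions with $k=4,6,2$ respectively.

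Putting these ingredients together, multiplying \eqref{hEL} by $x\cdot\nabla w_L$, integrating, and then rearranging yields
\begin{equation*}
\frac{L^{-2}}{2(\rho|\mathcal{D}|)^{2/d}}\int_{\partial\mathcal{D}_0}|\nabla w_L|^2(x\cdot\nu)\,dS+\frac{(d-2)L^{-2}}{2(\rho|\mathcal{D}|)^{2/d}}\int_{\mathcal{D}_0}|\nabla w_L|^2\,dx=\frac{d}{4}\int_{\mathcal{D}_0}w_L^4\,dx-\frac{d}{6}\int_{\mathcal{D}_0}w_L^6\,dx+\frac{d\mu_L}{2}\int_{\mathcal{D}_0}w_L^2\,dx,
\end{equation*}
which is the claimed identity.

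There is no serious obstacle here; this is a textbook Pohozaev computation. The only bookkeeping point that requires care is the boundary term: one must use the Dirichlet condition to replace $\nabla w_L$ by $(\partial_\nu w_L)\nu$ on $\partial\mathcal{D}_0$ so that the two pieces $\int_{\partial\mathcal{D}_0}(\nu\cdot\nabla w_L)(x\cdot\nabla w_L)\,dS$ and $-\tfrac12\int_{\partial\mathcal{D}_0}|\nabla w_L|^2(x\cdot\nu)\,dS$ combine with the correct coefficient $1/2$. The radial symmetry of $w_L$ is not actually needed for this identity, only the Dirichlet data and enough regularity to integrate by parts.
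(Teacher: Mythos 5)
Your proposal is correct and follows essentially the same route as the paper: multiply \eqref{hEL} by $x\cdot\nabla w_L$, integrate by parts using the divergence identities for the Laplacian and the polynomial terms, and use the Dirichlet condition $w_L=0$ on $\partial\mathcal{D}_0$ to reduce $\nabla w_L$ to its normal component so the two surface contributions combine into $\tfrac12\int_{\partial\mathcal{D}_0}|\nabla w_L|^2(x\cdot\nu)\,dS$. The only cosmetic difference is that the paper packages the nonlinearities into a primitive $F(w_L)$ and writes $f(w_L)\,x\cdot\nabla w_L=\operatorname{div}(xF(w_L))-dF(w_L)$, whereas you treat each power $k=2,4,6$ separately; these are the same computation.
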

\begin{pf}
Let $f(w_L)=w_L^3(x)-w_L^5(x)+\mu_L w_L(x)$ and
\begin{equation*}
F(w_L)=\int_0^{w_L} f(s)\,ds=\frac{1}{4}w_L^4-\frac{1}{6}w_L^6+\frac{\mu_L}{2}w_L^2.
\end{equation*}
It follows from \eqref{hEL} that
$$
0=\left(\frac{L^{-2}}{(\rho|\mathcal{D}|)^{\frac{2}{d}}}\Delta w_L+f(w_L)\right) x \cdot \nabla w_L.
$$
It is clear that
\begin{gather*}
\begin{aligned}
f(w_L) x \cdot \nabla w_L & =\operatorname{div}(x F(w_L))-d F(w_L), 
\end{aligned}
\end{gather*}
and
\begin{gather*}
\begin{aligned}
\Delta w_L x \cdot \nabla w_L & =\operatorname{div}(\nabla w_L x \cdot \nabla w_L)-|\nabla w_L|^2-x \cdot \nabla\left(\frac{|\nabla w_L|^2}{2}\right) \\
& =\operatorname{div}\left(\nabla w_L x \cdot \nabla w_L-x \frac{|\nabla w_L|^2}{2}\right)+\frac{d-2}{2}|\nabla w_L|^2.
\end{aligned}
\end{gather*}
Integrating by parts, we obtain
\begin{gather}\label{j1}
    \begin{aligned}
    &\quad\int_{\partial \mathcal{D}_0}\left(x F(w_L)+\frac{L^{-2}}{(\rho|\mathcal{D}|)^{\frac{2}{d}}}\nabla w_L x \cdot \nabla w_L-\frac{L^{-2}}{(\rho|\mathcal{D}|)^{\frac{2}{d}}} \frac{|\nabla w_L|^2}{2}x\right) \cdot \nu\,dS\\
    &=\int_{\mathcal{D}_0}\left[dF(w_L)-\frac{(d-2)}{2}\frac{L^{-2}}{(\rho|\mathcal{D}|)^{\frac{2}{d}}}|\nabla w_L|^2\right]\,dx.
    \end{aligned}
\end{gather}
On the other hand,  $w_L=0$ on $\partial \mathcal{D}_0$,  so that
$$
F(w_L)=0 \quad\text{and}\quad \nabla w_L=\nabla w_L \cdot \nu \nu.
$$
Combining \eqref{j1}, we obtain
$$
\frac{L^{-2}}{2(\rho|\mathcal{D}|)^{\frac{2}{d}}} \int_{\partial \mathcal{D}_0}|\nabla w_L|^2x\cdot\nu\,dS=\int_{\mathcal{D}_0}\left[dF(w_L)-\frac{(d-2)L^{-2}}{2(\rho|\mathcal{D}|)^{\frac{2}{d}}}|\nabla w_L|^2\right]\,dx.
$$
Thus,  we complete the proof of this lemma.
\end{pf}


\noindent\textbf{Acknowledgments} 
The authors declare that they have no conflict of interest. This work was supported by  the National Natural Science
Foundation of China (12301090).

\noindent\textbf{Data Availability Statement.} Data sharing not applicable to this article as no datasets were generatedor analyzed during the current study.






\begin{thebibliography}{90}		
\footnotesize
	
	



\bibitem{HB-83} H. Berestycki, T. Gallou\"et and O. Kavian, \'Equations de champs scalaires euclidiens non lin\'eaires dans le plan, \emph{C. R. Acad. Sci. Paris S\'er. I Math.} 297 (1983), 307-310. 

		
\bibitem{HB-83A} H. Berestycki and P.-L. Lions, Nonlinear scalar field equations, I existence of a ground state, \emph{Arch. Ration. Mech. Anal.} 82 (1983), 313-345. 

\bibitem{CV1993Asymptotics} F. Bethuel, H. Brezis and F. H\'elein,  Asymptotics for the minimization of a Ginzburg-Landau functional,  \emph{Calc. Var. Partial Differential Equations} 2 (1993), 123-148. 

\bibitem{JMCSbuslaev}  V.B. Buslaev and  V.E. Grikurov, Simulation of instability of bright solitons for NLS with saturating nonlinearity, \emph{IMACS J. Math. Comput. Simul.} 56 (2001), 539-546. 
		
\bibitem{ANYcazen} T. Cazenave, Semilinear Schr\"odinger Equations, Courant Lecture Notes in Mathematics, vol. 10. New York University, Courant Institute of Mathematical Sciences, AMS, New York (2003). 

\bibitem{CazenaveCMP} T. Cazenave and P.L. Lions, Orbital stability of standing waves for some nonlinear Schr\"{o}dinger equations, \emph{Commun. Math. Phys.} 85 (1982), 549-561. 
		
\bibitem{RMPar} R. Carles  and C. Sparber, Orbital stability vs. scattering in the cubic-quintic Schr\"odinger equation, \emph{Rev. Math. Phys.} 33 (2021), 2150004. 


\bibitem{PREchen} S. Chen, F. Baronio, J.M. Soto-Crespo, Y. Liu  and P. Grelu, Chirped Peregrine solitons in a class of cubic-quintic nonlinear Schr\"odinger equations, \emph{Phys. Rev. E} 93 (2016), 062202. 

\bibitem{Correggi-APDE-17}M. Correggi, D. Lundholm and N. Rougerie, Local density approximation for the almost-bosonic anyon gas, \emph{Anal. PDE} 10 (2017), 1169-1200. 

\bibitem{PREdesvataikov} A. Desyatnikov,  A. Maimistov  and B. Malomed, Three-dimensional spinning solitons in dispersive media with the cubic-quintic nonlinearity, \emph{Phys. Rev. E} 61 (2000), 3107-3113. 

\bibitem{GT97} D. Gilbarg and N.S. Trudinger, Elliptic Partial Differential Equations of Second Order, Springer, 1997. 

\bibitem{ARMAGontier}  D. Gontier, M. Lewin  and  F.Q. Nazar, The nonlinear Schr\"odinger equation for orthonormal functions: existence of ground states, \emph{Arch. Rational Mech. Anal.} 240 (2021), 1203-1254. 

\bibitem{JJ-10} J. Jang, Uniqueness of positive radial solutions of $\Delta u + f(u) = 0$ in $\mathbb{R}^N$, $N\geq2$, \emph{Nonlinear Anal.} 73 (2010) 2189-2198. 

\bibitem{Okavian} O. Kavian and F. Weissler, Self-similar solutions of the pseudo-conformally invariant nonlinear Schr\"{o}dinger equation, \emph{Mich. Math. J.} 41 (1994), 151-173. 
		
\bibitem{ARMAkillip} R. Killip, T. Oh, O. Pocovnicu and M. Visan, Solitons and scattering for the cubic-quintic nonlinear Schr\"odinger equation on $\mathbb{R}^3$, \emph{Arch. Ration. Mech. Anal.} 225 (2017), 469-548. 

\bibitem{Killip-18} R. Killip, J. Murphy and M. Visan, The initial-value problem for the cubic-quintic NLS with nonvanishing boundary conditions, \emph{SIAM J. Math. Anal.} 50 (2018), 2681-2739. 

\bibitem{Lewin-20}  M. Lewin and S. Rota Nodari, The double-power nonlinear Schr\"odinger equation and its generalizations: uniqueness, nondegeneracy and applications, \emph{Calc. Var. Partial Differential Equations} 59 (2020), 197. 

\bibitem{lw-24} D. Li and Q. Wang,  A note on normalised ground states for the two-dimensional cubic-quintic nonlinear Schr\"odinger equation, \emph{Bull. Aust. Math. Soc.} 190 (2024), 552-561. 

\bibitem{lw-arxiv} D. Li and Q. Wang, The ground state of  a cubic-quintic nonlinear Schr\"{o}dinger equation with radial potential in the Thomas-Fermi limit, arxiv preprint (2024), arXiv 2410.14300. 
		
\bibitem{anailsis} E.H. Lieb  and M. Loss, Analysis, in Graduate Studies in Mathematics, 2nd ed. (American Mathematical Society, Providence, RI, 2001) Vol. 14. 

\bibitem{Lieb-book-stability} E.H. Lieb and R. Seiringer, The stability of matter in quantum mechanics, Cambridge university press, 2010. 

\bibitem{PRALieb} E.H. Lieb, R. Seiringer and J. Yngvason, Bosons in a trap: A rigorous derivation of the Gross-Pitaevskii energy functional,  \emph{Phys. Rev. A}  61 (2000), 043602. 

\bibitem{CMPLieb} E.H. Lieb, R. Seiringer and J. Yngvason, A rigorous derivation of the Gross-Pitaevskii energy functional for a two-dimensional Bose gas,  \emph{Comm. Math. Phys.}  224 (2001), 17-31. 

\bibitem{PDmalomed} B. Malomed, Vortex solitons: Old results and new perspectives, \emph{Phys. D} 399 (2019), 108-137. 

\bibitem{IMMartel} Y. Martel, Asymptotic stability of small standing solitary waves of the one-dimensional cubic-quintic Schr\"odinger equation, \emph{Invent. math.} 237 (2024), 1253-1328. 

\bibitem{PMPMartel} Y. Martel, Asymptotic stability of solitary waves for the 1D cubic-quintic Schr\"odinger equation with no internal mode, \emph{Probab. Math. Phys.} 3 (2022), 839-867. 

\bibitem{PREmih} D. Mihalache, D. Mazilu, L.-C. Crasovan, B.A. Malomed and F. Lederer, Three-dimensional spinning solitons in the cubic-quintic nonlinear medium, \emph{Phys. Rev. E} 61 (2000), 7142-7145. 

\bibitem{JFAvm} V. Moroz and J. Van Schaftingen, Ground states of nonlinear Choquard equation: Existence, qualitative properties and decay asymptotics, \emph{J. Funct. Anal.} 265 (2013), 153-184. 

\bibitem{Nguyen-Ricaud24}  D.T. Nguyen and J. Ricaud, On one-dimensional Bose gases with two-body and (critical) attractive three-body interactions, \emph{SIAM J. Math. Anal.} 56 (2024),  3203-3251. 

\bibitem{Nguyen-Ricaud25} D.T. Nguyen and J. Ricaud, Stabilization against collapse of 2D attractive Bose-Einstein condensates with repulsive, three-body interactions,  \emph{Lett. Math. Phys.} 115 (2025), 31. 

\bibitem{Nirenbergelliptic} L. Nirenberg, On elliptic partial differential equations, \emph{Ann. Sc. Norm. Sup.}  13 (1958), 116-162.  

\bibitem{Ohta} M. Ohta, Stability and instability of standing waves for one-dimensional nonlinear Schr\"odinger equations with double power nonlinearity, \emph{Kodai Math. J.} 18 (1995), 68-74. 

\bibitem{JS-00} J. Serrin and M. Tang, Uniqueness of ground states for quasilinear elliptic equations, \emph{Indiana Univ. Math. J.} 49 (2000), 897-923. 

\bibitem{Soave-jde} N. Soave, Normalized ground states for the NLS equation with combined nonlinearities, \emph{J. Differential Equations} 269 (2020), 6941-6987. 

\bibitem{Soave-jfa} N. Soave, Normalized ground states for the NLS equation with combined nonlinearities: the Sobolev critical case, \emph{J. Funct. Anal.} 279 (2020), 108610. 

\bibitem{Qwang} Q. Wang and B. Feng, On a parameter-stability for normal ground states of two-dimensional cubic-quintic nonlinear Schr\"odinger equations, \emph{Z. Angew. Math. Phys}. 73 (2022), 1-17. 

\bibitem{CMPwein}  M.I. Weinstein, Nonlinear Schr\"odinger equations and sharp interpolations estimates, \emph{Commun. Math. Phys.} 87 (1983), 567-576. 

\bibitem{CPAMwein} M.I. Weinstein, Lyapunov stability of ground states of nonlinear dispersive evolution equations, \emph{Commun. Pure Appl. Math.} 29 (1986), 51-68. 
\end{thebibliography}
\end{document}